\title{On $\mathfrak{X}$-transitive groups and conjugate separable $\mathfrak{X}$-subgroups}
\author{O. Al-Raisi}
\address{O. Al-Raisi: Department of  Mathematics,  College of Science, Sultan Qaboos University, Muscat, Oman}
\author{M. Shahryari}
\address{M. Shahryari: Department of  Mathematics,  College of Science, Sultan Qaboos University, Muscat, Oman}
\email{m.ghalehlar@squ.edu.om}
\newtheorem{corollary}{Corollary}[section]
\newtheorem{proposition}{Proposition}[section]
\newtheorem {theorem}{Theorem}[section]
\newtheorem{lemma}{Lemma}[section]
\newtheorem{definition}{Definition}[section]
\numberwithin{equation}{section}
\newcommand{\XT}{\mathfrak{X}\mathrm{T}}
\newcommand{\CSX}{\mathrm{CS}\mathfrak{X}}
\newcommand{\X}{\mathfrak{X}}
\newcommand{\idX}{\mathrm{id}(\mathfrak{X})}
\newcommand{\idnX}{\mathrm{id}_n(\mathfrak{X})}
\begin{document}

\maketitle
\begin{abstract}
For a given variety of groups $\X$, we develop a systematic theory of $\CSX$-groups and $\XT$-groups, extending ideas proposed in \cite{Shah}. We analyze the interplay between these classes, describe their structural properties, and examine their connections with equational domains and residually $A$-free groups. Furthermore, we prove by elementary means that every finite $\CSX$-group lies in $\X$.
\end{abstract}
\vspace{1cm}

{\bf AMS Subject Classification} 20E06, 20E26, 20F70.\\
{\bf Keywords} $\mathrm{CT}$-group; $\mathrm{CSA}$-group; conjugately separable subgroup; variety of groups; inductive class; universal class; residually free group; residually $A$-free group.

\vspace{1cm}


A subgroup $H$ of a given group $G$ is called {\em malnormal} if for every element $x\in G\setminus H$ we have  $H\cap H^x=\{1\}$. A group $G$ is called {\em conjugate separable abelian} ($\mathrm{CSA}$) if all maximal abelian subgroups of $G$ are malnormal.
The class of $\mathrm{CSA}$-groups is broad and plays a significant role in the study of residually free groups, universal theory of non-abelian free groups, limit groups, exponential groups and equational domains in algebraic geometry over groups (see \cite{BMR}, \cite{Champ}, \cite{MR1} , and \cite{MR2}). Another class of groups with very close connections to $\mathrm{CSA}$-groups is the class of $\mathrm{CT}$-groups ({\em commutative transitive groups}). A group is $\mathrm{CT}$ if commutativity is a transitive relation on the set of its non-identity elements. Despite this simple definition, the class of CT groups is also central to the study of residually free groups. Every $\mathrm{CSA}$-group is $\mathrm{CT}$ but the converse is not true. B. Baumslag proved that in the presence of residual freeness, both properties are equivalent.

In recent decades, considerable effort has been devoted to the study of these classes and their generalizations. A generalization of $\mathrm{CT}$-groups is introduced in \cite{Ciob} to extend the above mentioned theorem of B. Baumslag. Many interesting relations between $\mathrm{CSA}$- and $\mathrm{CT}$-groups are presented in \cite{Fine}, along with an extensive review of the existing literature.

The second author of the present paper proposed an idea in \cite{Shah} for generalizing the notions of $\mathrm{CSA}$- and $\mathrm{CT}$-groups: Suppose that $\X$ is a class  of  groups. A group $G$  is an   $\XT$-group  if and only if, for any two $\X$-subgroups $A$ and $B$ of $G$, the assumption $A\cap B\neq \{1\}$ implies that $\langle A, B\rangle$ is also an $\X$-subgroup of $G$. Similarly,  a group $G$ is a $\CSX$-group if  all of its maximal $\X$-subgroups  are malnormal. In \cite{Shah}, he examined the special case where $\X=\mathfrak{N}_k$, the variety of nilpotent groups of nilpotency class not exceeding $k$, which contains $\mathrm{CSA}$- and $\mathrm{CT}$-groups as particular instances. Hence, \cite{Shah} addresses the study of general relations among the classes of $\CSX$- and $\XT$-groups, their characterizations, constructions, and universal axiomatization, as well as their connections to residual $A$-free groups, in the case when $\X$ is the variety of nilpotent groups of class at most $k$. Furthermore, many previous results are shown to remain valid for $\CSX$ and $\XT$-groups when $\X$ is the variety $\mathfrak{N}_k$. As an application, the ideas of \cite{Shah} are used in \cite{Omar-Shah} to introduce a large class of groups which are {\em equational domain} in the sense of algebraic geometry over groups (see \cite{BMR} for definitions).

We develop the general theory in this paper: For a given class $\X$ (which is taken to be a variety in most of the cases), we study the classes of $\CSX$- and $\XT$-groups, their relations, characterizations, and constructions. We show that some of the previous results extend to $\CSX$ and $\XT$-groups under some assumptions on the class $\X$. However, certain properties do not hold in this more general framework.

In the first section, we discuss the basic properties of the classes $\XT$ and $\CSX$. In the second section, we prove that finite $\CSX$-groups belong to $\X$ using a completely elementary argument that avoids any results from representation theory or the classification of finite simple groups. In Section 3 we discuss the inclusion $\CSX\subseteq \XT$ and, by means of a counterexample, we show that this inclusion does not hold in general. Section 4 shows that for inductive classes of groups containing all abelian groups, every $\CSX$-group which is not an $\X$-group is an equational domain. In Section 5 we show that the free product of two $\CSX$-groups belongs to $\CSX$. Then in the following section we show that for a large class of varieties, the classes $\XT$ and $\CSX$ are universal. Finally, in Section 7 we investigate the relations between the classes $\XT$, $\CSX$, and (fully) residually $A$-free groups.

For clarity, we introduce the following notation. The subgroup generated by a subset $X$ of a group $G$ will be denoted by $\langle X\rangle$, and the normal closure of this subgroup is $\langle X^G\rangle$. A conjugate $a^x$ (or $H^x$) is $x^{-1}ax$ (similarly, $x^{-1}Hx$). A commutator $[x, y]$ is $x^{-1}y^{-1}xy$ and all simple commutators $[x_1, x_2, \ldots, x_{k+1}]$ are left aligned. The notation $[x,_k y]$ stands for $[x, y, \ldots, y]$ where $y$ occurs $k$ times.  For a class $\{ G_i\}_{i\in I}$ of groups, the corresponding free product will be denoted by $\prod^{\ast}_{i\in I}G_i$. A variety generated by a single group $G$ (single identity $x\approx 1$) is denoted by $\mathrm{Var}(G)$ (likewise, $\mathrm{Var}(w\approx 1)$). In most of the cases, we prefer to denote a law $w(x_1, \ldots, x_n)\approx 1$ simply by the element $w$ of the corresponding free group. Hence, whenever we say that $w$ is an identity of the variety $\X$, we mean that all elements of $\X$ satisfy the law $w(x_1, \ldots, x_n)\approx 1$. The set of all $n$-variable identities of a variety $\X$ will be denoted by $\idnX$; we write $\idX$ for the set of all identities of $\X$.

\section{Basic Properties and Generalized Centralizers}
It is known that a group $G$ is $\mathrm{CT}$ if and only if, for every maximal abelian subgroup $A$ of $G$, the centralizer of every non-trivial element of $A$ is $A$ itself. Accordingly, it seems natural to generalize the notion of a centralizer of an element in a group when studying $\XT$- and $\CSX$-groups; we do so in this section.  In what follows, by an {\em inductive class} we mean a class $\X$ which is closed under subgroups and under the union of any chain of its members. Interestingly, every class with this property is also closed under direct limit of injective direct systems. The proof of this classical fact of lattice theory rests on a transfinite induction and the reader can find it in \cite{Mark} (there, it is shown that even if a class is closed under union of well-ordered chains, then it contains the limits of all injective direct systems). Consequently, a group belongs to an inductive class $\X$ if and only if, all of its finitely generated subgroups belong to $\X$. Obviously,  all {\em universal classes} (classes which are axiomatized by universal sentences in the first order language of groups) and, in particular, all varieties of groups, are inductive.

\begin{definition}
Let $\X$ be an inductive class of groups. Given a group $G$ and an element $a\in G$, we define the $\X$-centralizer of $a$ in $G$, denoted $C_{\X}^G(a)$ (or simply $C_{\X}(a)$ when $G$ is understood from the context)  by
$$
C_{\X}^G(a)=\{x\in G:\ \langle a,x \rangle\in \X \}.
$$
\end{definition}

A few remarks regarding this definition are in order. First, note that the $\X$-centralizer of $a$ will not be empty if $\X$ contains all cyclic groups since $1\in C_\X(a)$ in this case.  Moreover, it is clear that if $x\in C_{\X}(a)$ then so is $x^{-1}$. However, $C_{\X}(a)$ is not necessarily a subgroup of $G$ since the product of two elements in $C_{\X}(a)$ need not be an element of $C_{\X}(a)$. To see this, let $\X$ be the variety of metabelian groups, $G=S_4$, $a=(2\ 3)$, $x=(1\ 2)$ and $y=(2\ 3\ 4)$. Then, both $x$ and $y$ belong to $C_{\X}(a)$ but, $xy$ does not belong to $C_{\X}(a)$. Of course, when $\X$ is the variety of abelian groups, $C_{\X}(a)$ is simply the centralizer of $a$ in $G$ which is indeed a subgroup of $G$.

\begin{proposition}\label{Cent-1}
Let $\X$ be an inductive class of groups. Then, a group $G$ belongs to $\XT$ if and only if, for every non-identity element $a\in G$, the set $C_{\X}(a)$ is empty or an $\X$-subgroup of $G$.
\end{proposition}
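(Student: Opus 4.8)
The plan is to lean on the two features that make $\X$ inductive: closure under subgroups, and the criterion recalled in the preamble that a group lies in $\X$ exactly when all of its finitely generated subgroups do. The organizing idea in both directions is to use a fixed non-identity element $a$ as a pivot lying inside every subgroup under consideration, so that any two $\X$-subgroups built around it automatically intersect non-trivially and the $\XT$ hypothesis becomes applicable.

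For the ($\Leftarrow$) direction I would argue directly. Assume each $C_\X(a)$ with $a\neq 1$ is empty or an $\X$-subgroup, and let $A,B$ be $\X$-subgroups with $A\cap B\neq\{1\}$. Fix $a\in(A\cap B)\setminus\{1\}$. For any $x\in A$ the subgroup $\langle a,x\rangle$ sits inside $A\in\X$, hence lies in $\X$ by subgroup-closure, so $x\in C_\X(a)$; thus $A\subseteq C_\X(a)$ and likewise $B\subseteq C_\X(a)$. Since $a$ itself witnesses $C_\X(a)\neq\varnothing$, the hypothesis forces $C_\X(a)$ to be an $\X$-subgroup, and then $\langle A,B\rangle\subseteq C_\X(a)\in\X$ gives $\langle A,B\rangle\in\X$, again by subgroup-closure. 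Hence $G\in\XT$.

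For the ($\Rightarrow$) direction, assume $G\in\XT$ and that $C_\X(a)$ is non-empty; I must show it is an $\X$-subgroup. The engine is a single induction that repeatedly pivots on $a$: I claim that for any finite list $x_1,\ldots,x_n\in C_\X(a)$ one has $\langle a,x_1,\ldots,x_n\rangle\in\X$. The base case $n=1$ is the definition of $C_\X(a)$. For the inductive step, set $A=\langle a,x_1,\ldots,x_{n-1}\rangle$ and $B=\langle a,x_n\rangle$; both lie in $\X$ and both contain the non-identity element $a$, so $A\cap B\neq\{1\}$ and $\XT$ delivers $\langle A,B\rangle=\langle a,x_1,\ldots,x_n\rangle\in\X$.

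This single claim then yields everything, and verifying that it does is the step deserving the most care. Closure under the group operations follows since, for $x,y\in C_\X(a)$, the claim gives $\langle a,x,y\rangle\in\X$, whence $\langle a,xy\rangle$ and $\langle a,x^{-1}\rangle$, being subgroups of it, lie in $\X$, placing $xy$ and $x^{-1}$ in $C_\X(a)$; together with $1\in C_\X(a)$ (because $\langle a\rangle$ embeds in any witnessing $\langle a,x\rangle\in\X$) this makes $C_\X(a)$ a subgroup. Membership in $\X$ follows because every finitely generated subgroup $\langle x_1,\ldots,x_n\rangle$ of $C_\X(a)$ sits inside $\langle a,x_1,\ldots,x_n\rangle\in\X$, hence lies in $\X$ by subgroup-closure, and inductiveness upgrades this to $C_\X(a)\in\X$ itself. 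The one delicate point throughout is that $\XT$ applies only to subgroups meeting non-trivially, which is precisely why the fixed pivot $a$ must be carried inside every subgroup appearing in the induction.
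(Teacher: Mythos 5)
Your proof is correct and follows essentially the same route as the paper's: in both directions you pivot on a fixed non-identity element $a\in A\cap B$ (resp.\ $a$ inside every generated subgroup) so that the $\XT$ hypothesis applies, and you verify $C_{\X}(a)\in\X$ by checking finitely generated subgroups, exactly as the paper does. Your version merely makes the inductive step $\langle a,x_1,\ldots,x_n\rangle\in\X$ explicit where the paper leaves it implicit.
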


\begin{proof}
Suppose $G\in \XT$ and $a$ is a non-identity element of $G$. Let $C_{\X}(a)$ be non-empty; we must show that it is closed under multiplication. So, let $x, y\in C_{\X}(a)$. This means that the subgroups $\langle a, x\rangle$ and $\langle a, y\rangle$ of $G$ belong to $\X$. Then, the assumption $G\in \XT$ implies that $\langle a, x, y\rangle \in \X$, and consequently, $\langle a, xy\rangle \in \X$. This shows that $C_{\X}(a)$ is a subgroup of $G$. To prove $C_{\X}(a)\in \X$, we must show that every finitely generated subgroup of $C_{\X}(a)$ belongs to $\X$. Suppose  $b_1, \ldots, b_n\in C_{\X}(a)$. Then, we have
$$
\langle a, b_1\rangle, \ldots, \langle a, b_n\rangle \in \X,
$$
and hence, by $\XT$, we must have $\langle a, b_1, \ldots, b_n\rangle\in \X$. This implies that $\langle b_1, \ldots, b_n\rangle\in \X$ and therefore, $C_{\X}(a)\in \X$.

Conversely, suppose that, for every non-identity element $a\in G$, $C_{\X}(a)$ is empty or an $\X$-subgroup of $G$. Let $A$ and $B$ be $\X$-subgroups of $G$ and assume that  $A\cap B$ contains a non-identity element $a$. We will show that $\langle A,B\rangle$ is an $\X$-subgroup of $G$. To this end, note that $\langle a, x\rangle\in \X$ for each $x\in A$ since both $a$ and $x$ are in $A$ which is an $\X$-group. This shows that $A\subseteq C_{\X}(a)$. A similar argument shows that $B\subseteq C_{\X}(a)$. But we are assuming that $C_{\X}(a)$ is an $\X$-subgroup of $G$ so, if both $A$ and $B$ are subgroups of $C_{\X}(a)$, then so is $\langle A,B\rangle$. In particular, $\langle A,B\rangle$ is an $\X$-subgroup of $G$, proving that $G$ is an $\XT$-group.
\end{proof}

\begin{lemma}
Let $\X$ be a subgroup-closed class of groups. Let $G$ be a group, and  $a$ be a non-identity element of $G$. If $C_{\X}(a)$ is an $\X$-subgroup of $G$, then it is a maximal $\X$-subgroup of $G$.
\end{lemma}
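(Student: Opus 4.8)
The plan is to verify maximality in the most direct way: I would take an arbitrary $\X$-subgroup $K$ of $G$ with $C_{\X}(a)\subseteq K$ and show that in fact $K\subseteq C_{\X}(a)$, so that $K=C_{\X}(a)$. Since no $\X$-subgroup can then strictly contain $C_{\X}(a)$, and $C_{\X}(a)$ is an $\X$-subgroup by hypothesis, this is exactly maximality.

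The one preliminary bookkeeping step is to record that $a$ itself lies in $C_{\X}(a)$. Because $C_{\X}(a)$ is assumed to be a subgroup it is nonempty and contains the identity, so $\langle a,1\rangle=\langle a\rangle$ belongs to $\X$; equivalently, any single membership $\langle a,x\rangle\in\X$ together with subgroup-closure and $\langle a\rangle\le\langle a,x\rangle$ yields $\langle a\rangle\in\X$. Either way $\langle a,a\rangle=\langle a\rangle\in\X$, so $a\in C_{\X}(a)$. The key step is then almost immediate: for any $h\in K$, both $a$ (via $a\in C_{\X}(a)\subseteq K$) and $h$ lie in the $\X$-group $K$, so $\langle a,h\rangle$ is a subgroup of $K$; subgroup-closure of $\X$ forces $\langle a,h\rangle\in\X$, which is precisely the defining condition $h\in C_{\X}(a)$. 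Hence $K\subseteq C_{\X}(a)$ and the argument closes.

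I do not expect a genuine obstacle here: once subgroup-closure of $\X$ is available, the inclusion $K\subseteq C_{\X}(a)$ is essentially forced by the definition of the $\X$-centralizer, and the only point requiring a moment's attention is the observation $a\in C_{\X}(a)$. It is worth noting for the write-up that the hypothesis ``$C_{\X}(a)$ is an $\X$-subgroup'' plays no role in the maximality argument itself; it serves only to guarantee that $C_{\X}(a)$ is a legitimate $\X$-subgroup in the first place, so that calling it maximal is meaningful. The maximality rests purely on subgroup-closure and the definition of $C_{\X}(a)$.
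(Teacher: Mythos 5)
Your proof is correct and follows essentially the same route as the paper's: take an $\X$-subgroup $H\supseteq C_{\X}(a)$, observe that for $h\in H$ the subgroup $\langle a,h\rangle$ lies in the $\X$-group $H$, and conclude $h\in C_{\X}(a)$ by subgroup-closure. Your explicit verification that $a\in C_{\X}(a)$ is a small point the paper leaves implicit, but the argument is the same.
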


\begin{proof}
Let $a$ be a non-identity element of $G$ and suppose that $C_{\X}(a)$ is an $\X$-subgroup of $G$. Let $H$ be an $\X$-subgroup of $G$ that contains $C_{\X}(a)$ and note that, if $h$ is an element of $H$, then $\langle a, h \rangle$ is a subgroup of the $\X$-group $H$, and so, $\langle a, h \rangle\in\X$. But this means that $h\in C_{\X}(a)$ which shows that $H=C_{\X}(a)$.
\end{proof}

\begin{lemma}
Let $\X$ be a subgroup-closed class of groups and $G$ be an $\XT$-group. Suppose  $M$ is a maximal $\X$-subgroup of $G$. Then, for every $a\in M\setminus\{1\}$ we have $M=C_{\X}(a)$.
\end{lemma}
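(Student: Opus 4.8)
The plan is to prove the two inclusions $M\subseteq C_{\X}(a)$ and $C_{\X}(a)\subseteq M$ separately, relying only on subgroup-closure of $\X$, the $\XT$ hypothesis, and the maximality of $M$. Notably, no inductivity of $\X$ will be needed, so I would not route the argument through Proposition \ref{Cent-1}.

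First I would establish $M\subseteq C_{\X}(a)$. Fix any $x\in M$. Since $a,x\in M$ and $M\in\X$, the subgroup $\langle a, x\rangle$ is a subgroup of $M$, and because $\X$ is subgroup-closed we obtain $\langle a, x\rangle\in\X$. By the definition of the $\X$-centralizer this says exactly $x\in C_{\X}(a)$, giving $M\subseteq C_{\X}(a)$. This direction uses nothing beyond subgroup-closure and mirrors the easy half of the previous lemma.

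For the reverse inclusion $C_{\X}(a)\subseteq M$, take an arbitrary $x\in C_{\X}(a)$, so that $\langle a, x\rangle\in\X$. The key observation is that the two $\X$-subgroups $\langle a, x\rangle$ and $M$ have nontrivial intersection, since the fixed element $a$ lies in both and $a\neq 1$. Hence the $\XT$ property applies and yields $\langle\,\langle a, x\rangle, M\,\rangle\in\X$. Because $a\in M$, this joined subgroup is exactly $\langle M, x\rangle$, so $\langle M, x\rangle$ is an $\X$-subgroup of $G$ containing $M$. Maximality of $M$ then forces $\langle M, x\rangle = M$, whence $x\in M$. Combining the two inclusions gives $M=C_{\X}(a)$.

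I expect the only delicate point to be the bookkeeping in the second step: one must notice that it is legitimate to feed $\langle a, x\rangle$ and $M$ into the $\XT$ condition, the justification being that their intersection contains the fixed nonidentity element $a$, and that $\langle a, x, M\rangle$ collapses to $\langle M, x\rangle$ precisely because $a$ was chosen inside $M$. The interplay between the $\XT$ hypothesis and the maximality of $M$ is what drives the containment $C_{\X}(a)\subseteq M$; everything else is routine.
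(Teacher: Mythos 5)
Your proof is correct, and the two inclusions are handled somewhat differently from the paper. The containment $M\subseteq C_{\X}(a)$ is exactly the paper's argument: $\langle a,x\rangle\leq M$ plus subgroup-closure. For the reverse containment the paper is much terser: having shown $M\subseteq C_{\X}(a)$, it simply appeals to the maximality of $M$, which implicitly presupposes that $C_{\X}(a)$ is itself an $\X$-subgroup of $G$ containing $M$ — a fact that, via Proposition \ref{Cent-1}, is guaranteed for $\XT$-groups only when $\X$ is inductive, whereas this lemma is stated merely for subgroup-closed $\X$. Your element-wise argument — feeding $\langle a,x\rangle$ and $M$ into the $\XT$ condition through the common non-identity element $a$, collapsing the join to $\langle M,x\rangle$ because $a\in M$, and then invoking maximality — avoids that dependency entirely and works under exactly the stated hypotheses. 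It is the more self-contained route and in fact slightly more rigorous; the paper's version is shorter but leans on the surrounding generalized-centralizer machinery.
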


\begin{proof}
Let $a\in M\setminus \{1\}$. Then, for every $x\in M$, we have $\langle a, x\rangle\subseteq M$, and hence, $x\in C_{\X}(a)$. Thus, $M\subseteq C_{\X}(a)$ and, by the maximality assumption on $M$, we have $M=C_{\X}(a)$.
\end{proof}

\begin{proposition}\label{Cent-2}
Let $\X$ be an inductive class of groups. Suppose that $\CSX\subseteq \XT$. Then, a group $G$ is a $\CSX$-group if and only if for every non-identity element $a$ of $G$, $C_{\X}(a)$ is empty or a malnomral $\X$-subgroup of $G$.
\end{proposition}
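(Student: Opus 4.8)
The plan is to treat this as a biconditional and assemble it entirely from Proposition \ref{Cent-1} together with the two preceding Lemmas, so that essentially no new computation is needed. The organising observation is that \emph{both} sides of the equivalence force $G$ to lie in $\XT$, after which the malnormality bookkeeping is routine.

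For the forward implication I would suppose $G\in\CSX$. Since we are assuming $\CSX\subseteq\XT$, this gives $G\in\XT$ at once, and Proposition \ref{Cent-1} then tells us that for every non-identity $a$ the set $C_{\X}(a)$ is empty or an $\X$-subgroup. In the non-empty case I would invoke the Lemma asserting that an $\X$-centralizer which happens to be an $\X$-subgroup is automatically a \emph{maximal} $\X$-subgroup; since $G$ is $\CSX$, all of its maximal $\X$-subgroups are malnormal, so $C_{\X}(a)$ is a malnormal $\X$-subgroup, as required.

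For the converse the key point is that the hypothesis itself already yields $G\in\XT$: a malnormal $\X$-subgroup is in particular an $\X$-subgroup, so the assumption ``$C_{\X}(a)$ is empty or a malnormal $\X$-subgroup'' fits the criterion of Proposition \ref{Cent-1} and produces $G\in\XT$ \emph{without} recourse to the inclusion $\CSX\subseteq\XT$, which is unavailable at this stage. I would then let $M$ be any maximal $\X$-subgroup; the trivial case $M=\{1\}$ is malnormal for free, so I may choose $a\in M\setminus\{1\}$. Because $G\in\XT$, the Lemma on $\XT$-groups gives $M=C_{\X}(a)$, and since $M=C_{\X}(a)$ is non-empty (it contains $a$) it is malnormal by hypothesis. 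Thus every maximal $\X$-subgroup is malnormal and $G\in\CSX$.

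There is no genuinely hard step here; the only thing to watch is the logical ordering in the converse, namely that one must first derive $G\in\XT$ from the stated hypothesis via Proposition \ref{Cent-1} before the second Lemma can be applied, rather than trying to use the standing assumption $\CSX\subseteq\XT$, which cannot be invoked until $G$ is already known to be $\CSX$. The remaining care is purely in the edge cases: an empty $C_{\X}(a)$ on the one side and a trivial maximal $\X$-subgroup on the other.
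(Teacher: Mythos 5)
Your proof is correct and follows essentially the same route as the paper: the forward direction combines the standing inclusion $\CSX\subseteq\XT$ with Proposition \ref{Cent-1} and the maximality lemma, and the converse uses Proposition \ref{Cent-1} to get $G\in\XT$ and then the lemma identifying maximal $\X$-subgroups with $\X$-centralizers. Your explicit remarks on the logical ordering in the converse and on the trivial edge cases merely spell out steps the paper leaves implicit.
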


\begin{proof}
If $G$ is a $\CSX$-group, then it is an $\XT$-group by hypothesis, and accordingly, every non-empty $\X$-centralizer is a maximal $\X$-subgroup. As such, every non-empty $\X$-centralizer must be malnormal in $G$. Conversely,  if every non-empty $\X$-centralizer  is a malnormal $\X$-subgroup of $G$, then every maximal $\X$-subgroup of $G$ is malnormal in $G$ by the previous lemma.
\end{proof}

\begin{proposition}\label{CSX}
Let $\X$ be inductive and  $G$ be a group. If $G$ is a  $\CSX$-group, then for all $a\in G\setminus\{1\}$ and all $z\in G$, the condition $\langle a,a^z \rangle\in \X$ implies $\langle a,z \rangle\in \X$. Conversely, if $G$ is an  $\XT$-group and if for all $a\in G\setminus \{1\}$ and all $z\in G$, the condition $\langle a,a^z \rangle\in \X$ implies $\langle a,z \rangle\in \X$, then  $G$ is a $\CSX$-group.
\end{proposition}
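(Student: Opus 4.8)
The plan is to prove the two implications separately, converting in each case the hypothesis about the pair $\langle a, a^z\rangle$ into a statement about a maximal $\X$-subgroup and then exploiting malnormality in the forward direction and the $\XT$ property in the converse.

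For the forward implication, I would assume $G\in\CSX$ and fix $a\neq 1$ and $z\in G$ with $\langle a, a^z\rangle\in\X$. First I would use that $\X$ is inductive: since the union of any chain of $\X$-subgroups is again an $\X$-subgroup, Zorn's lemma furnishes a maximal $\X$-subgroup $M$ containing $\langle a, a^z\rangle$, so that $a\in M$ and $a^z\in M$. The key observation is that $a^z$ is a nonidentity element of $M\cap M^z$: it lies in $M$ by construction, and since $a\in M$ we have $a^z=z^{-1}az\in z^{-1}Mz=M^z$. Now $M$ is malnormal, being a maximal $\X$-subgroup of a $\CSX$-group, and the contrapositive of malnormality applied to the nonidentity intersection $M\cap M^z$ forces $z\in M$. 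Hence $\langle a,z\rangle\subseteq M\in\X$, and subgroup-closure of $\X$ yields $\langle a,z\rangle\in\X$.

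For the converse, I would assume $G\in\XT$ together with the stated implication and let $M$ be an arbitrary maximal $\X$-subgroup; the task is to show $M$ is malnormal. Suppose $x\in G$ satisfies $M\cap M^x\neq\{1\}$; I must deduce $x\in M$. Choosing $1\neq b\in M\cap M^x$ and setting $c=xbx^{-1}$, the inclusion $b\in M^x=x^{-1}Mx$ gives $c\in M$, and by construction $b=c^x$. Thus $c$ and $c^x=b$ both lie in $M$, so $\langle c, c^x\rangle\subseteq M$ belongs to $\X$. The crucial step is to apply the hypothesis with $a=c$ (nonidentity, being a conjugate of $b$) and $z=x$, which converts $\langle c, c^x\rangle\in\X$ into $\langle c, x\rangle\in\X$. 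Finally, $M$ and $\langle c,x\rangle$ are two $\X$-subgroups whose intersection contains $c\neq 1$, so $G\in\XT$ gives $\langle M, \langle c,x\rangle\rangle\in\X$; maximality of $M$ forces this join to equal $M$, whence $x\in\langle c,x\rangle\subseteq M$, as required.

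The calculations here are light, and the genuine care is in the conjugation bookkeeping. In the forward direction the work is in isolating the correct nonidentity element of $M\cap M^z$ and invoking malnormality in contrapositive form, while in the converse one must pass to $c$ rather than $b$ and select the right $z$ so that the hypothesis delivers exactly the pair $\langle c, x\rangle$ that meets $M$ nontrivially. The only input that is not purely formal is the existence of a maximal $\X$-subgroup above a given $\X$-subgroup, which is precisely where inductivity of $\X$ enters through Zorn's lemma.
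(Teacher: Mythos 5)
Your proof is correct and follows essentially the same route as the paper's: extend $\langle a,a^z\rangle$ to a maximal $\X$-subgroup via Zorn's lemma and invoke malnormality in the forward direction, and in the converse replace the witness of $M\cap M^x\neq\{1\}$ by a suitable conjugate so that the hypothesis applies, then use the $\XT$ property and maximality to conclude $x\in M$. The only differences are cosmetic choices of which conjugate of $M$ (or of the element) to work with.
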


\begin{proof}
Suppose that $G$ is a  $\CSX$-group; let $a\neq 1$ and $z$ be elements of $G$, and suppose that $\langle a,a^z \rangle\in \X$. Let $M$ be a maximal $\X$-subgroup of $G$ containing the subgroup $\langle a, a^z\rangle$ of $G$. We claim that $z\in M$. Indeed, $M$ is malnormal in $G$ and
$$
1\neq a\in M\cap M^{z^{-1}}.
$$
Hence, $z\in M$, and consequently,  $\langle a,z\rangle\in \X$.

Next, suppose that $G$ is an $\XT$-group and that, for all $a\in G\setminus\{ 1\}$ and all $z\in G$, the condition $\langle a,a^z \rangle\in \X$ implies $\langle a,z \rangle\in \X$. Let $M$ be a maximal $\X$-subgroup of $G$ and suppose that, for some $g\in G$, the intersection $M\cap M^g$ contains a non-trivial element $a$. Now, since $a\in M$ and $a\in M^g$, it follows that $\langle a, a^z\rangle \subseteq M$, where $z=g^{-1}$. But then, $\langle a, a^{z}\rangle$ is an $\X$-subgroup of $G$ as it is  a subgroup of the $\X$-subgroup $M$ of $G$. Therefore, $\langle a, z\rangle \in \X$. Using the $\XT$-property of $G$, we may conclude from $\langle a,z\rangle \in \X$ and $M\in \X$ that $\langle M,z\rangle\in \X$. However, $M$ is assumed to be a maximal $\X$-subgroup of $G$ so we must have $\langle M,z\rangle=M$ which implies that $z=g^{-1}$ belongs to $M$. This shows that $M$ is a malnormal subgroup of $G$ and concludes the proof.
\end{proof}

It can be easily verified that, for any class of groups $\X$, the class $\XT$ is subgroup-closed. We show that the class $\CSX$ has a similar property when $\X$ is inductive.

\begin{proposition}\label{Subgroup}
If $\X$ is an inductive class of groups, then the class of all $\CSX$-groups is subgroup-closed.
\end{proposition}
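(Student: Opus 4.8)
The plan is to show that every maximal $\X$-subgroup of a subgroup $H\le G$ is the trace on $H$ of a maximal $\X$-subgroup of the ambient $\CSX$-group $G$, and then to push malnormality downward through this intersection. First I would record the two standing facts that inductivity hands us for free: since $\X$ is closed under subgroups, the intersection of an $\X$-subgroup with any subgroup is again an $\X$-subgroup; and since $\X$ is closed under unions of chains, Zorn's lemma guarantees that \emph{every} $\X$-subgroup of any group is contained in a maximal one. So let $G\in\CSX$, let $H\le G$, and let $M$ be an arbitrary maximal $\X$-subgroup of $H$; the goal is to prove $M$ is malnormal in $H$. Viewing $M$ as an $\X$-subgroup of $G$, I would extend it to a maximal $\X$-subgroup $\widetilde{M}$ of $G$, which is malnormal in $G$ by the $\CSX$-hypothesis.

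The crux is the identity $M=\widetilde{M}\cap H$. The inclusion $M\subseteq \widetilde{M}\cap H$ is immediate. For the reverse, note that $\widetilde{M}\cap H$ is a subgroup of $\widetilde{M}$, hence lies in $\X$ by subgroup-closure, and it is contained in $H$; thus it is an $\X$-subgroup of $H$ containing $M$, and maximality of $M$ in $H$ forces $\widetilde{M}\cap H=M$. This is the step that genuinely uses that $\X$ is inductive (both to secure the extension $\widetilde{M}$ and to know $\widetilde{M}\cap H\in\X$), and it is where the argument could go wrong if one merely assumed $\X$ subgroup-closed without a guarantee that maximal $\X$-subgroups of $G$ exist above $M$.

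Finally I would transfer malnormality. Take $x\in H\setminus M$. Since $M=\widetilde{M}\cap H$ and $x\in H$ but $x\notin M$, the element $x$ cannot lie in $\widetilde{M}$, so malnormality of $\widetilde{M}$ in $G$ gives $\widetilde{M}\cap\widetilde{M}^{x}=\{1\}$. Because $x\in H$ we have $H^{x}=H$, so $M^{x}=(\widetilde{M}\cap H)^{x}=\widetilde{M}^{x}\cap H$, and therefore $M\cap M^{x}=(\widetilde{M}\cap\widetilde{M}^{x})\cap H=\{1\}$. Thus $M$ is malnormal in $H$, and since $M$ was an arbitrary maximal $\X$-subgroup of $H$, we conclude $H\in\CSX$. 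I expect the main obstacle to be not any computation but the conceptual point of the middle paragraph: realizing that maximality of $M$ inside $H$ exactly pins down $M$ as $\widetilde{M}\cap H$, after which the observation ``$x\in H\setminus M\Rightarrow x\notin\widetilde{M}$'' makes the malnormality transfer routine.
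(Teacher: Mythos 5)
Your proof is correct and follows essentially the same route as the paper: extend $M$ to a maximal $\X$-subgroup $\widetilde{M}$ of $G$ via Zorn's lemma, identify $M=\widetilde{M}\cap H$ using maximality of $M$ in $H$, and transfer malnormality from $\widetilde{M}$ down to $M$. The only difference is cosmetic — the paper argues the contrapositive (assuming $M\cap M^h\neq\{1\}$ and deducing $h\in M$) while you verify the definition of malnormality directly.
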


\begin{proof}
Let $G$ be a $CS\X$-group and $H$ be a subgroup of $G$; let $M$ be a maximal $\X$-subgroup of $H$ and $M'$ be a maximal $\X$-subgroup of $G$ containing $M$ (note that $M_0$ exists by Zorn's lemma since $\X$ is assumed to be inductive). Let $h\in H$ be such that $M\cap M^h$ is non-trivial. As $M\cap M^h\subseteq M_0\cap M_0^h$, it follows that $ M_0\cap M_0^h$ is not the trivial subgroup of $G$, and since $M_0$ is malnormal in $G$, we infer that $h\in M_0$. As such, $h\in M_0\cap H$. Now, $M\subseteq M_0\cap H$ and $M_0\cap H$ is an $\X$-subgroup of $H$ because the class $\X$ is subgroup-closed. But $M$ is a maximal $\X$-subgroup of $H$ so we must have $M=M_0\cap H$. Consequently,  $h\in M_0\cap H=M$.
\end{proof}

Special cases of the next result are established for $\mathrm{CT}$-groups (see \cite{Fine} and \cite{MR2}) and $\XT$-groups where $\X=\mathfrak{N}_k$ (see \cite{Shah}).

\begin{proposition}
Let $\X$ be an inductive class containing all abelian groups. If an $\XT$-group $G$ is decomposable, then $G$ belongs to $\X$
\end{proposition}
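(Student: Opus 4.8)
The plan is to exploit the direct-product structure together with the characterization of $\XT$-groups via $\X$-centralizers given in Proposition \ref{Cent-1}. Writing the decomposition as $G = A\times B$ with both $A$ and $B$ non-trivial, I would fix non-identity elements $a\in A$ and $b\in B$ and work with the two centralizers $C_{\X}(a)$ and $C_{\X}(b)$.

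The first observation is that, since elements of distinct direct factors commute, every two-generator subgroup that straddles the factors is abelian and hence lies in $\X$ (here is where I use that $\X$ contains all abelian groups). Concretely, for each $b'\in B$ the subgroup $\langle a,b'\rangle$ is abelian, so $b'\in C_{\X}(a)$; thus $B\subseteq C_{\X}(a)$, and symmetrically $A\subseteq C_{\X}(b)$. Because $1$ lies in each of $C_{\X}(a)$ and $C_{\X}(b)$, both centralizers are non-empty, so Proposition \ref{Cent-1} forces them to be $\X$-subgroups of $G$.

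The decisive step is to glue these two $\X$-subgroups together using the defining property of $\XT$. I would check that their intersection is non-trivial: the element $a$ lies in $C_{\X}(a)$ trivially (as $\langle a\rangle$ is cyclic), while $a\in A\subseteq C_{\X}(b)$, so $a$ is a non-identity element of $C_{\X}(a)\cap C_{\X}(b)$. The $\XT$-property then gives $\langle C_{\X}(a),C_{\X}(b)\rangle\in\X$. But this join already contains $B$ (inside $C_{\X}(a)$) and $A$ (inside $C_{\X}(b)$), hence contains $\langle A,B\rangle=G$; being a subgroup of $G$, it must equal $G$. Therefore $G\in\X$.

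I do not expect a serious obstacle here: once the two centralizers are recognized as $\X$-subgroups, the only point needing care is the non-triviality of their intersection, which is immediate from the shared element $a$. The one subtlety worth flagging is that establishing $A\in\X$ and $B\in\X$ separately would \emph{not} suffice, since inductive classes (and even varieties) need not be closed under direct products; the argument genuinely requires forming the $\XT$-join of the two centralizers rather than merely handling the factors in isolation.
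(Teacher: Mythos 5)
Your proof is correct. It does, however, take a noticeably different route from the paper's. The paper argues directly at the level of finitely generated subgroups: for non-identity $a_1,\dots,a_n\in A$ and $b_1,\dots,b_m\in B$ it observes that each $\langle a_i,b_1\rangle$ is abelian, hence in $\X$, and these share the non-trivial element $b_1$, so the $\XT$-property joins them into $\langle a_1,\dots,a_n,b_1\rangle\in\X$; iterating over the $b_j$ gives $\langle a_1,\dots,a_n,b_1,\dots,b_m\rangle\in\X$, and inductivity of $\X$ then yields $G\in\X$. You instead invoke Proposition \ref{Cent-1} to promote the two generalized centralizers $C_{\X}(a)\supseteq B$ and $C_{\X}(b)\supseteq A$ to full $\X$-subgroups of $G$ in one stroke, and then apply $\XT$ a single time to their join, which contains $\langle A,B\rangle=G$. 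The two arguments are close in spirit --- the finite-generation induction you avoid is exactly what is buried inside the proof of Proposition \ref{Cent-1}, which is also where the inductivity hypothesis gets used in your version --- but your packaging is cleaner: one application of $\XT$ to two explicitly identified $\X$-subgroups with the visibly non-trivial common element $a$, rather than a double iteration. Your closing remark that establishing $A\in\X$ and $B\in\X$ separately would not suffice is a worthwhile observation that the paper does not make explicit.
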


\begin{proof}
Let $G$ belong to $\XT$ and assume that $G=A\times B$ for non-trivial groups $A$ and $B$. Suppose  $a_1, \ldots, a_n\in A$, and $b_1, \ldots, b_m\in B$ are non-identity elements. We know that each of the subgroups $\langle a_i, b_1\rangle$ is abelian, and hence, belongs to $\X$. This shows that
$$
\langle a_1, \cdots, a_n, b_1\rangle \in \X,
$$
and repeating the same argument, we see that 
$$
\langle a_1, \cdots, a_n, b_1, \ldots, b_m\rangle \in \X.
$$
 Now, the assumption of being inductive implies that $A\times B\in \X$. 
\end{proof}

A similar argument can be used to verify the next result. Compare the last statement of this proposition with Lemma \ref{Marginal-2}.

\begin{proposition}
Let $\X$ be an inductive class  containing all abelian groups.
\begin{enumerate}
\item If the intersection of any two distinct maximal $\X$-subgroups of $G$ is trivial, then $G$ is an  $\XT$-group.
\item If $G$ is an $\XT$-group, then the intersection of any two distinct maximal $\X$-subgroups of $G$ is trivial.
\item If $G$ is a torsion-free $\XT$-group and $x^m=y^n$, for some $x,y\in G$ and some non-zero integers $n$ and $m$, then $\langle x,y\rangle\in \X$.
\item If an $\XT$-group $G$ does not belong to $\X$, then it is centerless, and hence, not nilpotent.
\end{enumerate}
\end{proposition}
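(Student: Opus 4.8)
The plan is to dispatch the four parts in turn, relying throughout on the existence of maximal $\X$-subgroups (guaranteed by Zorn's lemma, since an inductive class is closed under subgroups and under unions of chains, so the union of a chain of $\X$-subgroups is again an $\X$-subgroup), together with Proposition \ref{Cent-1} and the two lemmas preceding Proposition \ref{Cent-2}. For part (1) I would argue directly from the definition of $\XT$. Given $\X$-subgroups $A$ and $B$ sharing a non-identity element $a$, I would enlarge $A$ and $B$ to maximal $\X$-subgroups $M$ and $N$ respectively; both contain $a$, so $M\cap N\neq\{1\}$. By hypothesis distinct maximal $\X$-subgroups meet trivially, which forces $M=N$, whence $\langle A,B\rangle\subseteq M\in\X$ and therefore $\langle A,B\rangle\in\X$ by subgroup-closure. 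For the converse, part (2), I would invoke the lemma stating that in an $\XT$-group every maximal $\X$-subgroup $M$ satisfies $M=C_{\X}(a)$ for each $a\in M\setminus\{1\}$: if two maximal $\X$-subgroups $M_1$ and $M_2$ had a common non-identity element $a$, then $M_1=C_{\X}(a)=M_2$, so they could not be distinct.

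For part (3) I would exploit that $\X$ contains all abelian groups. If $x=1$ or $y=1$, then since $G$ is torsion-free and $m,n$ are non-zero, the relation $x^m=y^n$ forces both generators to be trivial, and $\langle x,y\rangle=\{1\}\in\X$. Otherwise set $w=x^m=y^n$; torsion-freeness gives $w\neq 1$. The cyclic subgroups $\langle x\rangle$ and $\langle y\rangle$ are abelian, hence lie in $\X$, and their intersection contains $w\neq 1$; applying the $\XT$-property then yields $\langle x,y\rangle=\langle\langle x\rangle,\langle y\rangle\rangle\in\X$.

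For part (4) I would suppose toward a contradiction that the $\XT$-group $G$ possesses a non-identity central element $z$. For every $x\in G$ the subgroup $\langle z,x\rangle$ is generated by the two commuting elements $z$ and $x$, hence is abelian and so lies in $\X$; thus $x\in C_{\X}(z)$ for every $x$, i.e. $C_{\X}(z)=G$. By Proposition \ref{Cent-1}, since $z\neq 1$, the set $C_{\X}(z)$ is an $\X$-subgroup of $G$, and so $G\in\X$, contradicting the hypothesis. Hence $Z(G)=\{1\}$. Since $G$ is non-trivial (it is not in $\X$, which contains the trivial group) and every non-trivial nilpotent group has non-trivial center, $G$ cannot be nilpotent.

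None of these steps is deep, so the only real care lies in the edge cases rather than in any genuine obstacle. In part (3) one must first verify that $w\neq 1$ and separately dispose of the trivial-generator situation before invoking $\XT$, since the argument breaks down if the common power is the identity. In part (4) the crucial (though elementary) observation is that the centrality of $z$ makes every $\langle z,x\rangle$ abelian; this is precisely the point at which the hypothesis that $\X$ contains all abelian groups is used, and it is what converts the central element into a global $\X$-centralizer.
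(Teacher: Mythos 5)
Your proposal is correct, and since the paper omits the proof entirely (saying only that ``a similar argument'' to the preceding proposition applies), your arguments --- Zorn plus the trivial-intersection hypothesis for (1), the lemma $M=C_{\X}(a)$ for (2), the two cyclic subgroups meeting in $x^m=y^n\neq 1$ for (3), and the observation that a central $z\neq 1$ forces $C_{\X}(z)=G\in\X$ via Proposition \ref{Cent-1} for (4) --- are exactly the natural completions the authors intend. Your handling of the edge cases in (3) (checking $w\neq 1$ and disposing of trivial generators via torsion-freeness) is careful and sound.
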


The classes $\XT$ and $\CSX$ enjoy two further properties; they are inductive whenever $\X$ is inductive.

\begin{proposition}
If $\X$ is an inductive class of groups, then so is $\XT$.
\end{proposition}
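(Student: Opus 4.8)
The plan is to show that $\XT$ is closed under two operations: passing to subgroups and taking unions of chains. Closure under subgroups has already been noted to hold for arbitrary classes $\X$ (the remark just before Proposition \ref{Subgroup}), so I may invoke it directly. What remains is to verify that if $\{G_i\}_{i\in I}$ is a chain of $\XT$-groups (ascending under inclusion) and $G=\bigcup_{i\in I}G_i$, then $G$ itself is an $\XT$-group. Since $\X$ is inductive, a group lies in $\X$ precisely when all its finitely generated subgroups do; this equivalence will be the engine of the argument.

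First I would set up the chain condition directly. Let $A$ and $B$ be $\X$-subgroups of $G$ with $A\cap B\ni a\neq 1$; the goal is to prove $\langle A,B\rangle\in\X$. Because membership in $\X$ is detected on finitely generated subgroups, it suffices to show that every finitely generated subgroup $\langle x_1,\dots,x_r\rangle$ of $\langle A,B\rangle$ lies in $\X$. Each generator $x_j$ is a word in finitely many elements of $A\cup B$, so altogether only finitely many elements $a_1,\dots,a_s\in A$ and $b_1,\dots,b_t\in B$ are involved, together with the distinguished element $a$. Since the $G_i$ form a chain and these finitely many elements all lie in $G$, there is a single index $i$ with $a, a_1,\dots,a_s,b_1,\dots,b_t\in G_i$.

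Next I would work inside $G_i$. The subgroups $A_0=\langle a,a_1,\dots,a_s\rangle$ and $B_0=\langle a,b_1,\dots,b_t\rangle$ are finitely generated subgroups of the $\X$-groups $A$ and $B$ respectively, hence lie in $\X$ (as $\X$ is subgroup-closed, being inductive), and they are both contained in $G_i$. Moreover $a\in A_0\cap B_0$ is non-trivial. Applying the $\XT$-property of $G_i$ to the $\X$-subgroups $A_0$ and $B_0$ of $G_i$, I conclude $\langle A_0,B_0\rangle\in\X$. But $\langle x_1,\dots,x_r\rangle$ is a subgroup of $\langle A_0,B_0\rangle$ by construction, so it too belongs to $\X$. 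This proves every finitely generated subgroup of $\langle A,B\rangle$ is in $\X$, whence $\langle A,B\rangle\in\X$ and $G\in\XT$.

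The only genuinely delicate point is the combinatorial bookkeeping in the second step: ensuring that finitely many elements of $G$ can always be captured in a single member $G_i$ of the chain. This is immediate for a chain — each element lies in some $G_{i}$, and finitely many indices have a maximum under the total order — but it is the step one must state carefully, and it is exactly where the chain hypothesis (as opposed to an arbitrary directed family) is used, though inductivity would in fact let one reduce the directed-limit case to this one. Everything else is a routine application of the characterization of $\X$-membership via finitely generated subgroups together with subgroup-closure.
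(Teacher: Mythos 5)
Your proof is correct and follows essentially the same strategy as the paper: reduce to a single member $G_i$ of the chain containing the relevant elements, apply the $\XT$-property of $G_i$, and use inductivity of $\X$ to conclude that $\langle A,B\rangle\in\X$. The only cosmetic difference is that the paper assembles $\langle A,B\rangle$ as the union of the ascending chain $H_i=\langle A\cap G_i, B\cap G_i\rangle$ and invokes closure under chain unions directly, whereas you invoke the equivalent finitely-generated-subgroup criterion for membership in an inductive class; both rest on the same fact about inductive classes.
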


\begin{proof}
Suppose that $\X$ is an inductive class of groups. We know that the class $\XT$ is subgroup-closed. It remains to show if $(G_i)_{i\in I}$ is an ascending chain of elements of $\XT$, then the union $G$ of this chain belongs $\XT$. Note that we may consider the index set $I$ to be a linearly ordered set so that $i\leq j$ implies $G_i\subseteq G_j$.  Let $A$ and $B$ be $\X$-subgroups of $G$ and suppose that $A\cap B$ contains a non-identity element $x$ of $G$. For each $i\in I$, let $A_i=A\cap G_i$ and $B_i=B\cap G_i$; note that each $A_i$ and each $B_i$ belongs to $\X$. Since $x\in G$ and $G$ is the union of $(G_i)_{i\in I}$, there must be an $i_0\in I$ such that $x\in G_{i_0}$; as such, $x\in A_{i_0}\cap B_{i_0}$. Therefore, for each $i\geq i_0$, the intersection $A_i\cap B_i$ is non-trivial and so, the join $H_i=\langle A_i, B_i\rangle$ is an $\X$-subgroup of $G$. Finally, it is easy to see that
$$
\langle A, B\rangle=\bigcup_{ i\geq i_0}H_i.
$$
Since the class $\X$ is inductive and $(H_i)_{i\in I}$ is an ascending chain, we conclude that $\langle A, B\rangle$ belongs to $\X$.
\end{proof}

\begin{proposition}
Let $\X$ be an inductive class of groups such that $\CSX\subseteq \XT$. Then, $\CSX$ is an inductive class of groups.
\end{proposition}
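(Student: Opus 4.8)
The plan is to leverage the two facts already in hand: $\CSX$ is subgroup-closed (Proposition \ref{Subgroup}), and $\XT$ is inductive (the preceding proposition). Since being inductive means being subgroup-closed together with being closed under unions of ascending chains, only the latter remains to be checked for $\CSX$. Accordingly, I would take an ascending chain $(G_i)_{i\in I}$ of $\CSX$-groups, form its union $G$, and aim to show that $G\in \CSX$.

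My first step is to place $G$ in $\XT$. This is immediate: the hypothesis $\CSX\subseteq \XT$ puts every $G_i$ in $\XT$, and since $\XT$ is inductive, the union $G$ lies in $\XT$ as well. This preliminary step matters because I intend to invoke the converse direction of Proposition \ref{CSX}, which requires the ambient group to be an $\XT$-group.

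The heart of the argument is to verify the element-wise condition of Proposition \ref{CSX}: for every $a\in G\setminus\{1\}$ and every $z\in G$, the assumption $\langle a, a^z\rangle\in \X$ must imply $\langle a, z\rangle\in \X$. This is precisely where the chain structure is used. Given such $a$ and $z$, the directedness of the chain lets me find a single index $i_0$ with $a, z\in G_{i_0}$, whence $a^z\in G_{i_0}$ and the subgroup $\langle a, a^z\rangle$ lies inside $G_{i_0}$. Since this subgroup belongs to $\X$ by assumption and $G_{i_0}$ is a $\CSX$-group, the forward direction of Proposition \ref{CSX}, applied internally to $G_{i_0}$, delivers $\langle a, z\rangle\in \X$. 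With $G\in \XT$ secured and this implication now established, the converse direction of Proposition \ref{CSX} yields $G\in \CSX$, completing the chain-closure step and, together with subgroup-closedness, the proof.

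The argument is short once the right characterization is selected, so I do not anticipate a serious obstacle. The only delicate point is the reduction to a common member $G_{i_0}$ containing both $a$ and $z$; this works because whether a two-generated subgroup lies in $\X$ is an intrinsic property of that subgroup, unaffected by the choice of ambient group, so the relation $\langle a, a^z\rangle\in\X$ computed in $G_{i_0}$ coincides with the same relation computed in $G$.
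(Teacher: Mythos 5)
Your proof is correct, but it takes a genuinely different route from the paper's. The paper argues directly with a maximal $\X$-subgroup $M$ of the union $G$: given $g$ with $M\cap M^g\neq\{1\}$, it locates an index $j$ with $g$ and a witness of the intersection inside $G_j$, proves that $G_j\cap M$ is a maximal $\X$-subgroup of $G_j$ (this is the technical core, and it is exactly where the hypothesis $\CSX\subseteq\XT$ enters, via $\langle M,K\rangle\in\X$ and maximality of $M$), and then invokes malnormality inside the $\CSX$-group $G_j$ to conclude $g\in M$. You instead bypass maximal subgroups of $G$ entirely by routing everything through the two-element characterization of Proposition \ref{CSX}: the condition ``$\langle a,a^z\rangle\in\X$ implies $\langle a,z\rangle\in\X$'' is local to a pair of elements, hence descends to some $G_{i_0}$ by directedness, where the forward direction of Proposition \ref{CSX} applies; the converse direction, which needs $G\in\XT$, is then available because $\XT$ is inductive and $\CSX\subseteq\XT$. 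Your observation that membership of $\langle a,a^z\rangle$ in $\X$ is intrinsic to the subgroup and independent of the ambient group is the right justification for the reduction. What your approach buys is modularity and brevity: the delicate step of the paper (that $G_j\cap M$ is maximal in $G_j$) is absorbed into a result already proved, at the mild cost of using both directions of Proposition \ref{CSX} (whose forward direction itself uses Zorn's lemma and hence the inductiveness of $\X$, which you do have). The paper's version is more self-contained and makes the role of maximality explicit. Both arguments use the same three external inputs: subgroup-closedness of $\CSX$, inductiveness of $\XT$, and the hypothesis $\CSX\subseteq\XT$.
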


\begin{proof}
We already showed  that the class $\CSX$ is subgroup-closed assuming that the class $\X$ is inductive and $\CSX\subseteq \XT$; thus, it remains to show that $\CSX$ is closed under ascending chains. To this end, let $(G_i)_{i\in I}$ be an ascending chain of $\CSX$-groups and let $G$ be the union of this chain. Let $M$ be a maximal $\X$-subgroup of $G$ and suppose $M\cap M^g\neq \{1\}$ for some $g\in G$; consider a non-identity element $x=m^g\in M\cap M^g$, where $m\in M$. We know that there is an index $j\in I$ such that $g, x, m\in G_j$. Also, we have
$$
x\in (G_j\cap M)^g\cap(G_j\cap M).
$$
We prove that $G_j\cap M$ is a maximal $\X$-subgroup of $G_j$. Suppose $G_j\cap M\subseteq K\subseteq G_j$ where $K$ is an $\X$-subgroup. Recall from the previous proposition that $G$ belongs to $\XT$ and we have $M\cap K\neq \{1\}$ (as it contains $x$). Hence, $\langle M, K\rangle$ is an $\X$-subgroup of $G$, and therefore, by the maximality of $M$, we have $K\subseteq M$. This means that $K=G_j\cap K\subseteq G_j\cap M$. Hence, $G_j\cap M$ is a maximal $\X$-subgroup of $G_j$. As $G_j$ is $\CSX$, we must have $g\in G_j\cap M$, and hence, $g\in M$, proving that $M$ is malnormal in $G$.
\end{proof}

Although one might expect that iterating $\X\mapsto \XT$ produces infinitely many new classes, the following result shows that this does not occur when $\X$ is subgroup-closed.

\begin{proposition}
For every subgroup-closed class of groups $\X$, we have $\XT^2=\XT$, where  $\XT^2$ stands for the class $(\XT)T$.
\end{proposition}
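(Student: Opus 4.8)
The plan is to prove the two inclusions $\XT \subseteq \XT^2$ and $\XT^2\subseteq \XT$ separately. The engine for both is a single elementary observation: \emph{any subgroup-closed class $\mathfrak{Y}$ satisfies $\mathfrak{Y}\subseteq \mathfrak{Y}\mathrm{T}$.} Indeed, if $G\in\mathfrak{Y}$ and $\mathfrak{Y}$ is subgroup-closed, then every subgroup of $G$ lies in $\mathfrak{Y}$, so the defining implication of $\mathfrak{Y}\mathrm{T}$ holds vacuously and $G\in\mathfrak{Y}\mathrm{T}$. I would also invoke the remark preceding this proposition that $\XT$ is subgroup-closed for every class $\X$.

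For the easy inclusion, I would apply the observation with $\mathfrak{Y}=\XT$: since $\XT$ is subgroup-closed, we get $\XT\subseteq(\XT)\mathrm{T}=\XT^2$.

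For the reverse inclusion $\XT^2\subseteq \XT$, take $G\in\XT^2$ together with two $\X$-subgroups $A$ and $B$ of $G$ satisfying $A\cap B\neq\{1\}$; the goal is to show $\langle A,B\rangle\in\X$. Since $\X$ is subgroup-closed, the observation gives $\X\subseteq\XT$, so $A$ and $B$ are in particular $\XT$-subgroups of $G$. The hypothesis $G\in\XT^2=(\XT)\mathrm{T}$, applied to the pair $A,B$, then forces $H:=\langle A,B\rangle$ to be an $\XT$-subgroup of $G$.

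The crux, and the only step that is not pure bookkeeping, is to upgrade the conclusion $H\in\XT$ to the stronger $H\in\X$. Here I would change viewpoint and regard $A$ and $B$ as $\X$-subgroups of $H$ itself, still with $A\cap B\neq\{1\}$, and apply the $\XT$-property of $H$ to this very pair. That property forces $\langle A,B\rangle$ to be an $\X$-subgroup of $H$; but $\langle A,B\rangle$ is all of $H$, so $H\in\X$. Thus $\langle A,B\rangle\in\X$, which is exactly what is needed to conclude $G\in\XT$. The one subtlety to handle with care is precisely this self-referential application of the $\XT$-property of $H$ to the generators $A,B$ of $H$; the rest is a routine unwinding of the two definitions.
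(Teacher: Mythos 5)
Your proof is correct and follows essentially the same route as the paper: the forward inclusion is immediate, and for the reverse you use subgroup-closure of $\X$ to see $A,B\in\XT$, apply $G\in\XT^2$ to get $H=\langle A,B\rangle\in\XT$, and then apply the $\XT$-property of $H$ to the pair $A,B$ inside $H$ to conclude $H\in\X$. The only quibble is wording: the implication defining $\mathfrak{Y}\mathrm{T}$ for a $\mathfrak{Y}$-group holds because its conclusion is always true, not ``vacuously.''
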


\begin{proof}
The inclusion $\XT\subseteq \XT^2$ is trivially true. For the reverse inclusion, suppose that $G\in \XT^2$ and let $A$ and $B$ be $\X$-subgroups of $G$ with $A\cap B\neq \{1\}$. Since $\X$ is assumed to be subgroup-closed, both $A$ and $B$ belong to the class $\XT$.  Now, using the assumption that $G\in \XT^2$, we conclude that $H=\langle A, B \rangle\in \XT$. But both $A$ and $B$ are $\X$-subgroups of the $\XT$-group $H$ and $A\cap B\neq \{1\}$ so we must have $\langle A,B\rangle\in \X$. This shows that $G\in \XT$ and concludes the proof.
\end{proof}

A similar result is valid for the case of the assignment $\X\mapsto \CSX$ provided that stronger assumptions are imposed.

\begin{proposition}
For every inductive class of groups $\X$ satisfying $\CSX\subseteq \XT$, we have $\mathrm{CS}^2\X=\CSX$, where $\mathrm{CS}^2\X=\mathrm{CS}(\CSX)$.
\end{proposition}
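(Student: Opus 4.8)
The plan is to prove the two inclusions separately, with $\CSX\subseteq \mathrm{CS}^2\X$ playing the role of the easy direction (exactly as $\XT\subseteq\XT^2$ did in the previous proposition). Since $\X$ is inductive, Proposition \ref{Subgroup} tells us that $\CSX$ is subgroup-closed; hence if $G\in\CSX$, then $G$ is itself a $\CSX$-subgroup of $G$ and is clearly the unique maximal one. As $G$ is vacuously malnormal in itself, every maximal $\CSX$-subgroup of $G$ is malnormal, which is precisely the statement $G\in\mathrm{CS}(\CSX)=\mathrm{CS}^2\X$.

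For the reverse inclusion I would first record the elementary observation that $\X\subseteq\CSX$: by subgroup-closedness of $\X$, any $\X$-group $H$ is its own unique maximal $\X$-subgroup, and it is malnormal in itself, so $H\in\CSX$. Now take $G\in\mathrm{CS}^2\X$ and let $M$ be an arbitrary maximal $\X$-subgroup of $G$; the goal is to show $M$ is malnormal in $G$. Since $M\in\X\subseteq\CSX$, the subgroup $M$ is a $\CSX$-subgroup of $G$. Using that $\CSX$ is inductive (proved above under the standing hypotheses that $\X$ is inductive and $\CSX\subseteq\XT$), Zorn's lemma lets me enlarge $M$ to a maximal $\CSX$-subgroup $N$ of $G$ with $M\subseteq N$. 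Two facts then fall out: first, because $G\in\mathrm{CS}^2\X=\mathrm{CS}(\CSX)$, the subgroup $N$ is malnormal in $G$; second, $M$ is still a maximal $\X$-subgroup of $N$ (any $\X$-subgroup of $N$ containing $M$ is an $\X$-subgroup of $G$ containing $M$, hence equals $M$ by maximality), and since $N\in\CSX$ this forces $M$ to be malnormal in $N$.

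The crux — the one genuinely new step — is then a transitivity principle for malnormality along the chain $M\subseteq N\subseteq G$: if $M$ is malnormal in $N$ and $N$ is malnormal in $G$, then $M$ is malnormal in $G$. I expect this to be the main (if short) obstacle, and I would verify it by a case split on an element $x\in G\setminus M$. If $x\in N$, then $x\in N\setminus M$, so malnormality of $M$ in $N$ gives $M\cap M^x=\{1\}$; if $x\notin N$, then $x\in G\setminus N$, so malnormality of $N$ in $G$ gives $N\cap N^x=\{1\}$, whence $M\cap M^x\subseteq N\cap N^x=\{1\}$ because $M\subseteq N$ and $M^x\subseteq N^x$. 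Applying this principle to the chain built above shows that every maximal $\X$-subgroup $M$ of $G$ is malnormal in $G$, that is, $G\in\CSX$, completing the reverse inclusion and the proof.
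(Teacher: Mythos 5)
Your proposal is correct and follows essentially the same route as the paper: the trivial inclusion via $G$ being its own unique (malnormal) maximal $\CSX$-subgroup, then extending a maximal $\X$-subgroup $M$ to a maximal $\CSX$-subgroup $N$ via inductivity of $\CSX$, and concluding by transitivity of malnormality along $M\subseteq N\subseteq G$. You even supply the case-split verification of that transitivity, which the paper leaves as an easy observation.
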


\begin{proof}
Let $G\in \CSX$ and observe that the only maximal $\CSX$-subgroup of $G$ is $G$ itself which is malnormal. This establishes that $G\in \mathrm{CS}^2\X$. Next, suppose that $G\in \mathrm{CS}^2\X$ and let $M$ be a maximal $\X$-subgroup of $G$. Clearly, $\X\subseteq \CSX$ so $M$ is a $\CSX$-subgroup of $G$. Because the class $\X$ is inductive and $\CSX\subseteq \XT$, the class $\CSX$ is also inductive, as noted above. So, we may extend $M$ to a maximal $\CSX$-subgroup $M^{\prime}$ of $G$. Now, $M$ is a maximal $\X$-subgroup of $G$ that is contained in $M^{\prime}$ so $M$ must be a maximal $\X$-subgroup of the $\CSX$-group $M^{\prime}$; as such $M$ is malnormal in $M^{\prime}$. Furthermore, since $G$ is assumed to be a $\mathrm{CS}^2\X$-group, we may infer that $M^{\prime}$ is malnormal in $G$. Finally, it is easy to see that malnormality is transitive; that is, because $M$ is malnormal in $M^{\prime}$ and $M^{\prime}$ is malnormal in $G$, it follows at once that $M$ is malnormal in $G$. This shows that $G\in \CSX$ which concludes the proof.
\end{proof}

As noted above, most propositions concerning properties of $\CSX$-groups rely on the assumption $\CSX\subseteq \XT$. This inclusion is known to hold for the case where $\X$ is the variety of abelian groups as well as the case when $\X=\mathfrak{N}_k$. We shall present further examples of varieties with this property; however, counterexamples also exist as we shall see.

\section{Finite $\CSX$-Groups}

Finite $\mathrm{CSA}$ groups are abelian. All known proofs of this fact relay on representation theory, the classification of finite simple groups, or the Feit-Thompson's theorem. The reader may see \cite{Fine}, \cite{Ould}, or \cite{Marin}. In \cite{Shah} a  similar result is proved for the case of $\X=\mathfrak{N}_k$: A finite $\CSX$ group belongs to $\X$. The proof given in \cite{Shah} depends on the structure of finite Frobenius groups. In this section, we give a completely elementary proof for the next result.

\begin{theorem}\label{Finite}
Let $\X$ be a class of groups satisfying the following:
\begin{enumerate}
\item Every finite cyclic group is an $\X$-group.
\item Every finite  $\CSX$-group belongs to $\XT$.
\end{enumerate}
Then, every finite $\CSX$-group is an $\X$-group.
\end{theorem}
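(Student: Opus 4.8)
The plan is to argue by contradiction through an elementary counting argument in the spirit of Frobenius, but carried out entirely by hand so that neither Frobenius's theorem nor any representation theory is invoked. Fix a finite $\CSX$-group $G$ and suppose, for contradiction, that $G\notin\X$; we may assume $G\neq\{1\}$, since the trivial group is the finite cyclic group of order $1$ and so lies in $\X$ by hypothesis (1). By hypothesis (2) we have $G\in\XT$, and this is the property I would exploit repeatedly.

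First I would establish that the non-identity elements of $G$ are partitioned by the maximal $\X$-subgroups, that is, every $a\in G\setminus\{1\}$ lies in \emph{exactly one} maximal $\X$-subgroup. For existence, note that $\langle a\rangle$ is a finite cyclic group, hence an $\X$-subgroup by (1), and since $G$ is finite it extends to a maximal $\X$-subgroup. For uniqueness, if $a\in M_1\cap M_2$ with $M_1,M_2$ maximal $\X$-subgroups and $a\neq 1$, then $M_1,M_2\in\X$ with $M_1\cap M_2\neq\{1\}$, so the $\XT$-property forces $\langle M_1,M_2\rangle\in\X$; maximality then gives $M_1=\langle M_1,M_2\rangle=M_2$. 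Hence
$$
G\setminus\{1\}=\bigsqcup_{M}\big(M\setminus\{1\}\big),
$$
a disjoint union over all maximal $\X$-subgroups $M$.

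Next I would bring in the $\CSX$-hypothesis. Every maximal $\X$-subgroup $M$ is malnormal, which (for $M\neq\{1\}$) forces $N_G(M)=M$, so $M$ has exactly $[G:M]$ conjugates; moreover each conjugate $M^g$ is again a maximal $\X$-subgroup (conjugation is an automorphism of $G$, and $\X$ is isomorphism-closed), and distinct conjugates meet trivially by the uniqueness established above. Writing $n=|G|$ and grouping the maximal $\X$-subgroups into conjugacy classes with representatives $M_1,\dots,M_r$ and orders $m_i=|M_i|$, counting the non-identity elements in the disjoint union yields
$$
n-1=\sum_{i=1}^{r}\frac{n}{m_i}\,(m_i-1)=rn-n\sum_{i=1}^{r}\frac{1}{m_i},
$$
and hence $\sum_{i=1}^{r}\frac{1}{m_i}=r-1+\frac{1}{n}$. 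Each $M_i$ is non-trivial, so $m_i\ge 2$ and $\sum 1/m_i\le r/2$; feeding this into the identity gives $r/2\le 1-1/n<1$, so $r=1$. The identity then reads $1/m_1=1/n$, whence $m_1=n$ and $M_1=G$. But $M_1\in\X$, so $G\in\X$, contradicting our assumption and completing the proof.

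The only genuinely delicate point is the partition step: it is the $\XT$-property (guaranteed here precisely by hypothesis (2)) that upgrades ``distinct maximal $\X$-subgroups meet trivially'' to ``every non-identity element has a \emph{unique} containing maximal $\X$-subgroup'', and this is exactly what makes the element count an equality rather than merely an inequality. Once the disjoint union and the normalizer computation $N_G(M)=M$ are in place, the remainder is the short arithmetic above, and no structure theory of Frobenius groups is required.
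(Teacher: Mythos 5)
Your proposal is correct and follows essentially the same route as the paper: both establish that the maximal $\X$-subgroups partition $G\setminus\{1\}$ into conjugacy classes of trivially intersecting conjugates (using hypothesis (2) to get the $\XT$-property and malnormality to get $N_G(M)=M$), and both then run the identical count $|G|-1=\sum_{k=1}^{r}[G:M_k](|M_k|-1)$ with $|M_k|\ge 2$ to force $r=1$ and $M_1=G$. The only difference is presentational — you state the partition directly, whereas the paper builds the covering iteratively — so there is nothing substantive to add.
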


Before presenting the proof, it is worth mentioning that a more general statement is true. However, its proof employs properties of Frobenius groups (see \cite{Isaacs} for the structure of Frobenius groups).

\begin{proposition}
Let $\X$ be a class of groups which contains all finite cyclic groups. Then, every finite $\CSX$-group belongs to $\X$.
\end{proposition}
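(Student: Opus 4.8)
The plan is to argue by contradiction, exploiting the fact that a proper nontrivial malnormal subgroup of a finite group is precisely a Frobenius complement. We may assume $G\neq\{1\}$, and if $G\in\X$ there is nothing to prove, so suppose $G\notin\X$. Since $\X$ contains every finite cyclic group, each nonidentity $a\in G$ satisfies $\langle a\rangle\in\X$ and hence lies in some maximal $\X$-subgroup; thus maximal $\X$-subgroups exist and their union is all of $G$. Because $G\notin\X$, none of them equals $G$, so every maximal $\X$-subgroup is proper and nontrivial, and by the $\CSX$-hypothesis each is malnormal. Fixing one such subgroup $M$, the relation $M\cap M^g=\{1\}$ for all $g\notin M$ is exactly the Frobenius condition, so by Frobenius's theorem $G$ is a Frobenius group with complement $M$ and a nontrivial normal kernel $K$, where $K$ consists of $1$ together with all elements lying in no conjugate of $M$.

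The decisive step is to show that \emph{every} maximal $\X$-subgroup is a conjugate of $M$. Here I would invoke the structure theory of finite Frobenius groups (see \cite{Isaacs}): the Frobenius kernel is a characteristic subgroup, hence is the same $K$ for every Frobenius structure carried by $G$, and the Frobenius complements form a single conjugacy class. Concretely, any maximal $\X$-subgroup $N$ is malnormal and therefore a Frobenius complement for its own kernel $K_N$; uniqueness of the kernel gives $K_N=K$, so $N$ is a complement to $K$, and since $K$ is a normal Hall subgroup (kernel and complement have coprime orders) which is solvable (indeed nilpotent, by Thompson), the Schur--Zassenhaus theorem shows that all complements to $K$ are conjugate. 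Hence $N$ is conjugate to $M$.

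With this in hand the contradiction is immediate. Choose any $k\in K\setminus\{1\}$, which exists because $K\neq\{1\}$. On the one hand $k$ lies in some maximal $\X$-subgroup, which by the previous paragraph is a conjugate of $M$; on the other hand, by the very definition of the Frobenius kernel, no nonidentity element of $K$ lies in any conjugate of $M$. This contradiction forces $G\in\X$.

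Finally, I would stress where the real content lies: the only nontrivial ingredient is the uniqueness of the Frobenius kernel, equivalently the conjugacy of all maximal $\X$-subgroups. This is precisely the role played in the elementary Theorem~\ref{Finite} by the extra hypothesis that finite $\CSX$-groups be $\XT$-groups. The $\XT$-property provides, through the $\X$-centralizer formalism of Proposition~\ref{Cent-1}, direct control over how distinct maximal $\X$-subgroups intersect, after which a short index--order counting argument closes the proof with no Frobenius input whatsoever. Dropping that hypothesis deprives us of this control, and it is exactly the structure theory of Frobenius groups (uniqueness of the kernel and conjugacy of complements) that supplies the needed replacement; I therefore expect the establishment of this conjugacy to be the main obstacle.
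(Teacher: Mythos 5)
Your argument is correct and follows essentially the same route as the paper: proper nontrivial malnormal subgroups of a finite group are Frobenius complements, hence all maximal $\X$-subgroups are conjugate, and since every element of $G$ lies in a cyclic (hence $\X$-) subgroup, $G$ would be covered by the conjugates of a single proper subgroup, a contradiction. The only cosmetic difference is at the final step, where the paper invokes the impossibility of covering a finite group by conjugates of a proper subgroup while you exhibit a nonidentity Frobenius-kernel element lying in no conjugate of the complement; these are the same fact.
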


\begin{proof}
Let $G$ be a finite $\CSX$-group and suppose $G$ does not belong to $\X$. Consider two maximal $\X$-subgroups $A$ and $B$ of $G$. Note that as $\X$ contains all finite  cyclic groups, we may infer that $A$ and $B$ are non-trivial, and since $G$ does not belong to $\X$, we may conclude that $A$ and $B$ are proper subgroups of $G$. The malnormality of $A$ and $B$ implies that they are Frobenius complements in $G$, and hence, they are conjugate (see \cite{Isaacs}). This means that $B=A^x$ for some $x\in G$, and consequently, every maximal $\X$-subgroup of $G$ has this form. Now, for any element $g\in G$, the cyclic subgroup $\langle g\rangle$ must be contained in some maximal $\X$-subgroup which yields
$$
G=\bigcup_xA^x.
$$
This leads to the contradiction $G=A$.
\end{proof}

We now return to Theorem \ref{Finite} and present its proof.

\begin{proof}  Suppose that $G$ is a finite $\CSX$-group. The first step is to obtain a special covering of $G$ as follows. We begin by picking a non-trivial element $m_1$ of $G$. This element generates a non-trivial cyclic subgroup $\langle m_1\rangle$ of $G$ which is an $\X$-group by our assumption. Since $G$ is assumed to be finite, there is a maximal $\X$-subgroup $M_1$ of $G$ that contains this cyclic group. If this group  $M_1$ happens to be $G$, then $G$ is an $\X$-group and we are done. If not, then we consider the set $[M_1]$ of all distinct conjugates of $M_1$ in $G$ of which there are $[G:N_G(M_1)]$ many. Observe that, by malnormality of $M_1$ in $G$, $N_G(M_1)=M_1$ so that the size of $[M_1]$ is actually $[G:M_1]$. Now, every element in $[M_1]$ is a maximal $\X$-subgroup.  Furthermore, malnormality implies that for each $x\in G\setminus M_1$ the intersection of $M_1$ and $M_1^x$ is trivial. Moreover, if $M_1^x$ and $M_1^y$ are distinct elements of $[M_1]$, then $M_1^x\cap M_1^y=\{1\}$. Indeed, if $M_1^x\cap M_1^y\neq \{1\}$, then $M\cap M^{yx^{-1}}\neq \{1\}$; the malnormality of $M_1$ forces $M_1= M_1^{yx^{-1}}$ and so, $M_1^x=M_1^y$ which is a contradiction. Now, if the union $U_1$ of all the members in $[M_1]$ happens to coincide with $G$ then we obtained our desired covering of $G$. If not, then we pick an element $m_2$ in $G\setminus U_1$; this element generates a cyclic group which is contained in a maximal $\X$-subgroup $M_2$ of $G$ as previously argued. Note that since $M_2$ contains $m_2\in G\setminus U_1$, the $\XT$-property implies that no member of $[M_1]$ can intersect $M_2$ non-trivially. In fact, more can be said here; no element of the set $[M_2]$ (the set of all distinct conjugates of $M_2$) can intersect an element of $[M_1]$ non-trivially. Indeed, if $M_1^x\cap M_2^y\neq \{1\}$, then the $\XT$ property would imply $M_1^x= M_2^y$ which in turn yields a contradiction. Suppose $U_1\cup U_2=G$, where $U_2$ is the union of all members of $[M_2]$. Then, we have our desired covering of $G$; if not we proceed as above. Since $G$ is assumed to be finite, this process must eventually terminate resulting in distinct maximal $\X$-subgroups $M_1, \ldots, M_r$ of $G$ such that the following hold:
\begin{enumerate}
\item For each $i$ and $j$ with $i\neq j$, every member of the set $[M_i]$  intersects trivially with every member of the set $[M_j]$.
\item If $U_i$ is the union of all members of $[M_i]$, then
 $$
 G=\bigcup_{i=1}^{r}U_i.
 $$
 \item Each $M_i$ contains a non-identity element $m_i$.
 \end{enumerate}
With this special covering of $G$ at hand, we can count the number of elements in $G$ as follows:
\begin{eqnarray*}
|G|-1&=&\sum_{k=1}^{r}[G:N_G(M_k)](|M_k|-1)\\
     &=&\sum_{k=1}^{r}[G:M_k](|M_k|-1)\\
     &=&\sum_{k=1}^{r}\frac{|G|}{|M_k|}(|M_k|-1).
\end{eqnarray*}
Dividing both sides by $|G|$ yields
\begin{eqnarray*}
1-\frac{1}{|G|}&=&\sum_{k=1}^{r}(1-\frac{1}{|M_k|})\\
               &=&r-\sum_{k=1}^{r}\frac{1}{|M_k|}.
\end{eqnarray*}
Upon rearranging this we get the following
$$
r=1+\sum_{k=1}^{r}\frac{1}{|M_k|}-\frac{1}{|G|}<1+\sum_{k=1}^{r}\frac{1}{|M_k|}.
$$
Recall now that each $M_k$ contains a non-identity element so that, $1/|M_k|\leq 1/2$ for each $k$. Combining this with the above gives rise to the following inequalities
$$
r<1+\sum_{k=1}^{r}\frac{1}{|M_k|}\leq 1+\sum_{k=1}^{r}\frac{1}{2}=1+r/2.
$$
That is $r<2$, and hence, $r=1$. That is, $G$ is actually the union $U_1$ of all members of $[M_1]$. Of course, this implies that $G=M_1$ showing that $G$ belongs to $\X$.
\end{proof}

As an application, we now have an elementary proof of the following result as well.

\begin{corollary}
Let $\X$ be an inductive class of groups that contains all finite cyclic groups and suppose that $\CSX\subseteq \XT$. If a $\CSX$-group $G$ is locally finite, then it belongs to the class $\X$.
\end{corollary}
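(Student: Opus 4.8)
The plan is to reduce everything to the finite case already settled by Theorem \ref{Finite} and then use that $\X$ is inductive to reassemble the global conclusion from the finitely generated subgroups of $G$. First I would recall the characterization of inductive classes from the opening of Section 1: since $\X$ is closed under subgroups and under unions of chains, a group lies in $\X$ exactly when all of its finitely generated subgroups do. Consequently it suffices to prove that every finitely generated subgroup $H$ of $G$ belongs to $\X$.

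Next I would bring in local finiteness. Any finitely generated subgroup $H$ of a locally finite group is finite. Moreover, because $\X$ is inductive, Proposition \ref{Subgroup} tells us that the class $\CSX$ is subgroup-closed, so $H$ inherits the $\CSX$ property from $G$. Thus $H$ is a \emph{finite} $\CSX$-group, which is precisely the kind of object Theorem \ref{Finite} controls.

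At this stage I would verify the two hypotheses of Theorem \ref{Finite} for the class $\X$ in question. Condition (1), that every finite cyclic group is an $\X$-group, is part of the standing assumption. Condition (2), that every finite $\CSX$-group lies in $\XT$, is immediate from the hypothesis $\CSX\subseteq \XT$, since this inclusion makes \emph{every} $\CSX$-group (finite or not) an $\XT$-group. With both hypotheses in hand, Theorem \ref{Finite} applies and gives $H\in \X$.

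Finally, having shown that every finitely generated subgroup of $G$ lies in $\X$, I would invoke the inductive property of $\X$ once more to conclude $G\in \X$. The argument is a clean bookkeeping reduction, so there is no substantial obstacle; the only points deserving attention are that subgroup-closure of $\CSX$ (which itself depends on $\X$ being inductive) is what legitimately transfers the $\CSX$ property from $G$ down to its finite subgroups, and that hypothesis (2) of Theorem \ref{Finite} costs nothing here because it is handed to us by the blanket assumption $\CSX\subseteq \XT$ rather than requiring a separate verification.
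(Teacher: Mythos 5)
Your proposal is correct and follows essentially the same route as the paper: reduce to finitely generated subgroups via the inductive property of $\X$, note that each such subgroup is finite and inherits the $\CSX$ property by Proposition \ref{Subgroup}, and apply Theorem \ref{Finite}. You merely spell out the verification of the two hypotheses of Theorem \ref{Finite}, which the paper leaves implicit.
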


\begin{proof}
To show that a locally finite $\CSX$-group $G$ belongs to $\X$, it suffices to show that every finitely generated subgroup $H$ of $G$ belongs to $\X$. But, a finitely generated subgroup $H$ of $G$ is finite and is a $\CSX$-group by Proposition \ref{Subgroup}. This yields the desired result.
\end{proof}

A natural question is the classification of finite $\XT$-groups for various classes $\X$. This has already been accomplished for finite $\mathrm{CT}$ (see \cite{Fine} for a complete history). For other cases, however, the  classifications remain open, although it may be possible to obtain results for certain special varieties of groups.

\section{The Implication $\CSX\to \XT$}

As previously stated, the class of $\mathrm{CSA}$-groups is included in the class of $\mathrm{CT}$-groups; the same is true for the case when $\X=\mathfrak{N}_k$, the variety of nilpotent groups of class at most $k$ (see \cite{Shah}). In this section, we show that the same property is valid for any inductive class of nilpotent groups.  But, as emphasized before, there are varieties $\X$ for which the implication $\CSX\to \XT$ is false; an example of such a variety is provided below.

\begin{proposition}\label{Nilpotent}
Let $\X$ be an inductive class of groups such that every element of  $\X$ is nilpotent.  Then, $\CSX\subseteq \XT$.
\end{proposition}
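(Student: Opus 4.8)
The plan is to reduce the $\XT$-property to a containment statement about maximal $\X$-subgroups: I will show that any two $\X$-subgroups of a $\CSX$-group $G$ sharing a nontrivial element lie inside a single maximal $\X$-subgroup. So let $A$ and $B$ be $\X$-subgroups of $G$ with a common nonidentity element $a \in A \cap B$. Since $\X$ is inductive, a union of a chain of $\X$-subgroups is again an $\X$-subgroup, so Zorn's lemma lets me enlarge $A$ to a maximal $\X$-subgroup $M_A$ and $B$ to a maximal $\X$-subgroup $M_B$; because $G \in \CSX$, both are malnormal in $G$. The whole proof then reduces to showing $M_A = M_B$, for then $\langle A, B\rangle \leq M_A \in \X$, and subgroup-closedness of $\X$ gives $\langle A, B\rangle \in \X$, i.e. $G \in \XT$.

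The engine of the argument is a single elementary observation about malnormality: if $H \leq G$ is malnormal and $1 \neq a \in H$, then $C_G(a) \subseteq H$. Indeed, for $c \in C_G(a)$ one has $a^{c} = a \in H$, so $a$ is a nontrivial element of $H \cap H^{c^{-1}}$, and malnormality forces $c^{-1} \in H$, hence $c \in H$. I emphasize that this step uses \emph{only} malnormality, not nilpotency. Applying it to $M_A$ with the element $a$ gives $C_G(a) \subseteq M_A$.

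Nilpotency enters only to manufacture a suitable central element. Since $a \in M_B$ with $a \neq 1$, the group $M_B$ is a nontrivial nilpotent group (every member of $\X$ is nilpotent), so its center $Z(M_B)$ is nontrivial. Every $z \in Z(M_B)$ commutes with $a \in M_B$, whence $Z(M_B) \subseteq C_G(a) \subseteq M_A$. Now fix $1 \neq z \in Z(M_B)$. On the one hand $z$ is central in $M_B$, so $M_B \subseteq C_G(z)$; on the other hand $z$ is a nontrivial element of the malnormal subgroup $M_A$, so the lemma gives $C_G(z) \subseteq M_A$. Combining these, $M_B \subseteq M_A$, and maximality of $M_B$ forces $M_B = M_A$, completing the reduction and hence the proof.

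The only genuine obstacle is finding the correct bridge between the two maximal subgroups: malnormality pins centralizers inside each maximal $\X$-subgroup, but to pull all of $M_B$ into $M_A$ one needs an element that is simultaneously central in $M_B$ and nontrivial in $M_A$. Nilpotency supplies exactly this via the nontriviality of $Z(M_B)$, while the containment $Z(M_B)\subseteq C_G(a)\subseteq M_A$ places such an element in $M_A$. I expect no further difficulties; the remaining ingredients (existence of maximal $\X$-subgroups by Zorn, and the fact that a nontrivial nilpotent group has nontrivial center) are standard.
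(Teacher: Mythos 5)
Your proof is correct, but it takes a genuinely different route from the paper's. The paper fixes a single maximal $\X$-subgroup $M\supseteq A$ and, for each $b\in B$, exploits the law $[b,_k x]=1$ holding in the nilpotent group $B$: it peels off one commutator at a time, using malnormality of $M$ at each stage to conclude $[b,_{k-1}x]\in M$, then $[b,_{k-2}x]\in M$, and so on down to $b\in M$. You instead work with two maximal $\X$-subgroups $M_A\supseteq A$ and $M_B\supseteq B$ and prove they coincide, using the observation that malnormality of $H$ forces $C_G(a)\subseteq H$ for any $1\neq a\in H$, together with the nontriviality of $Z(M_B)$ to produce an element simultaneously central in $M_B$ and lying in $M_A$. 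Both arguments are sound; each step of yours checks out (in particular, inductive classes are subgroup-closed by the paper's definition, so $\langle A,B\rangle\leq M_A$ does give $\langle A,B\rangle\in\X$). The paper's descent along the lower central series is more hands-on and only requires one maximal subgroup; your argument is arguably cleaner and slightly more general, since it uses nilpotency only through the fact that every nontrivial member of $\X$ has nontrivial center --- so it would apply verbatim to any inductive class of groups with that weaker property (e.g.\ hypercentral groups), and it yields as a byproduct the statement that in a $\CSX$-group any two maximal $\X$-subgroups meeting nontrivially are equal.
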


\begin{proof}
Let $G$ be a $\CSX$-group. Suppose that $A$ and $B$ are $\X$-subgroups of $G$ with $A\cap B\neq \{1\}$ and let $x$ be a non-trivial element in $A\cap B$. Since $\X$ is an inductive class of groups, Zorn's lemma implies the existence of a maximal $\X$-subgroup $M$ of $G$ containing $A$; we will show that $B\subseteq M$. To this end, let $b\in B$ and note that $B$ is nilpotent by our assumption. Suppose that $B$ is nilpotent of class $k$ and observe that
$$
1=[b,_kx]=[b,_{k-1}x]^{-1}x^{-1}[b,_{k-1}x]x\in M.
$$
But $x$ itself belongs to $M$ so,
$$
1\neq x\in M\cap M^{[b,_{k-1}x]^{-1}}.
$$
Since $M$ is malnormal in $G$, we conclude that $[b,_{k-1}x]\in M$. But then, from
$$
[b,_{k-1}x]=[b,_{k-2}x]^{-1}x^{-1}[b,_{k-2}x]x\in M
$$
we deduce (as before) that $[b,_{k-2}x]\in M$. Repeating this, we eventually see that we must have $[b,x]=b^{-1}x^{-1}bx\in M$. Again, using the malnormality of $M$ together with the fact that $x\in M$, we obtain the desired result that $b\in M$. This shows that $B\subseteq M$ and hence, $\langle A, B \rangle\subseteq M$. Finally, since we are assuming that $\X$ is subgroup-closed, we may conclude that $\langle A,B \rangle$ is an $\X$-group which completes the proof.
\end{proof}

Now, suppose $p$ is an odd prime and consider  $\X=\mathrm{Var}(D_{2p})$, the variety generated by the dihedral group $D_{2p}$. Consider the following copies of the group $D_{2p}$:
$$
A=\langle a_1, a_2:\ a_1^2=a_2^p,\ a_1a_2=a_2^{-1}a_1\rangle,
$$
$$
B=\langle\ b_1, b_2:\ b_1^2=b_2^p,\ b_1b_2=b_2^{-1}b_1\rangle.
$$
Then, consider the amalgamated free product $G=A\ast_{a_1=b_1}B$.

\begin{theorem}
The group $G$ belongs to $\CSX$ but not to $\XT$.
\end{theorem}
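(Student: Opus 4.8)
The plan is to establish the two assertions separately: first that $G = A\ast_{a_1=b_1}B$ is a $\CSX$-group, and then that $G$ fails to be an $\XT$-group. Throughout, I will exploit the explicit structure of $D_{2p}$ and the amalgamated free product, together with normal-form arguments in $G$. The key structural observation is that $\X = \mathrm{Var}(D_{2p})$ is a variety of groups whose finite members are built from $D_{2p}$; in particular, since $p$ is odd, $D_{2p}$ is metabelian with a cyclic subgroup of index $2$, and the $\X$-subgroups of $G$ that we must understand are (conjugates of) $A$, $B$, and their subgroups. The amalgamating subgroup $\langle a_1\rangle = \langle b_1\rangle$ is the unique (up to the involution structure) cyclic subgroup of order $2p$ sitting inside each factor as the ``rotation-plus-central'' part, so I would first pin down exactly which subgroups of $G$ lie in $\X$.

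To prove $G\in\CSX$, I would identify the maximal $\X$-subgroups of $G$ and show each is malnormal. The natural candidates are the conjugates of $A$ and of $B$: each is a copy of $D_{2p}$, hence an $\X$-group, and I would argue via normal forms that no element outside $A$ can join $A$ to a larger $\X$-subgroup (the obstruction being that adjoining any element of $B\setminus\langle a_1\rangle$ produces a subgroup containing a free product structure that violates the laws of $\X$). For malnormality, suppose $1\neq g\in A\cap A^h$ with $h\notin A$; writing $h$ in normal form and conjugating, I would derive a nontrivial relation that a Britton-type / normal-form argument in the amalgamated product rules out unless $h\in A$. The amalgamated subgroup $\langle a_1\rangle$ itself requires care: it is shared by $A$ and $B$, so I must check that elements of $\langle a_1\rangle$ do not lie in the intersection of two \emph{distinct} maximal $\X$-subgroups in a way that breaks malnormality; here the fact that $a_1$ is noncentral in each factor (it inverts $a_2$) is what saves malnormality.

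To prove $G\notin\XT$, I would produce two $\X$-subgroups whose intersection is nontrivial but whose join violates the laws of $\X$. The obvious choice is $A$ and $B$ themselves: both are copies of $D_{2p}$, hence in $\X$, and they intersect in the nontrivial amalgamated subgroup $\langle a_1\rangle$ (which contains $a_1\neq 1$). If $G$ were $\XT$, then $\langle A,B\rangle = G$ would lie in $\X$; but $G = A\ast_{a_1=b_1}B$ contains a nonabelian free subgroup (since the amalgamation is over a proper subgroup of each factor with index $>1$ on both sides, $p$ odd guarantees the indices exceed $2$ somewhere), and I would exhibit an explicit word — built from $a_2$ and $b_2$ — that violates a defining law of $\mathrm{Var}(D_{2p})$, thereby showing $G\notin\X$. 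This directly contradicts $G\in\XT$.

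The \textbf{main obstacle} I anticipate is the careful analysis of malnormality for the maximal $\X$-subgroups, particularly handling the shared amalgamated subgroup $\langle a_1\rangle$: I must rule out, via normal-form/Britton-lemma reasoning, every way an element could conjugate a copy of $D_{2p}$ so as to share a nontrivial element with it without lying inside it. A secondary difficulty is verifying that $A$ and $B$ really are \emph{maximal} $\X$-subgroups and that no ``unexpected'' larger $\X$-subgroup exists; this hinges on understanding precisely which finitely generated subgroups of an amalgamated product of two copies of $D_{2p}$ can satisfy all laws of $\mathrm{Var}(D_{2p})$, which is where I would invest the most care and where the oddness of $p$ plays its decisive role.
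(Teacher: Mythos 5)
Your treatment of $G\notin\XT$ is sound and coincides with the paper's: $A,B\in\X$ intersect in $\langle a_1\rangle\neq\{1\}$, while $\langle A,B\rangle=G$ is infinite and finitely generated, hence cannot lie in the locally finite variety $\X=\mathrm{Var}(D_{2p})$ (no explicit law-violating word or free subgroup is needed). Your plan for the malnormality of $A$ and $B$ via normal forms in the amalgam is also the paper's route, and your instinct that the oddness of $p$ is decisive is correct: it enters exactly where a relation of the form $b_2^{2s}=1$ with $0<s\leq p-1$ must be excluded.

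The genuine gap is in the step ``the maximal $\X$-subgroups are exactly the conjugates of $A$ and $B$.'' You reduce this to ``understanding which finitely generated subgroups of $G$ satisfy the laws of $\X$,'' but that only settles the finitely generated case: a finitely generated $\X$-subgroup is finite (local finiteness of $\X$) and hence conjugate into a factor by the torsion theorem for amalgams. A maximal $\X$-subgroup $M$ is a priori only \emph{locally} finite, i.e.\ a directed union of finite subgroups, each of which lies in \emph{some} conjugate of $A$ or $B$ --- possibly a different conjugate for each finite piece. Nothing in your plan forces a single conjugate to contain all of $M$ simultaneously, and this local-to-global step is the real content of the paper's proof of $G\in\CSX$. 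The paper resolves it by letting the finite subgroups $\langle S\rangle$ (for finite $S\subseteq M\setminus\{1\}$) act on the coset set $T=\{Ag: g\in G\}\cup\{Bg: g\in G\}$, using the \emph{already established} malnormality of $A$ and $B$ to show that each fixed-point set $\mathrm{Fix}_T(\langle S\rangle)$ is nonempty of size at most $4p$, and then taking $S_0$ with $\mathrm{Fix}_T(\langle S_0\rangle)$ minimal to produce a coset fixed by every finite subset of $M$; that coset yields the single conjugate of $A$ or $B$ containing $M$, whence $M=A^g$ or $M=B^g$ by maximality. Without this compactness argument (or an equivalent bounded-orbit argument on the Bass--Serre tree), your proof of $G\in\CSX$ does not close.
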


\begin{proof}
First, note that the variety $\X$ is locally finite. Consider $A$ and $B$ as subgroups of $G$. We have $A, B\in \X$ and $A\cap B\neq \{1\}$. But, $\langle A, B\rangle =G\not\in \X$. This shows that $G$ does not belong to $\XT$. Before proving  that $G$ is a $\CSX$-group, we show that $A$ and $B$ are malnormal subgroups of $G$. It is enough to prove that $A$ is malnormal since the argument for $B$ is analogous. Suppose $A\cap A^g\neq \{1\}$ for some $g\in G$. Let
$$
g=x_1y_1x_2y_2\cdots x_my_m
$$
be a reduced form of $g$ so that,
$$
x_i\in A\setminus \langle a_1\rangle\quad \mathrm{for}\quad 2\leq i\leq m
$$
and
$$
y_i\in B\setminus \langle b_1\rangle\quad \mathrm{for}\quad 1\leq i\leq m-1.
$$
There is a non-identity element $h\in A$ such that
$$
y_m^{-1}x_m^{-1}\cdots y_1^{-1}x_1^{-1}hx_1y_1\cdots x_my_m\in A.
$$
This shows that $m=1$, and hence, $y_1^{-1}x_1^{-1}hx_1y_1\in A$. We know that
$$
x_1^{-1}hx_1=a_1^ia_2^r\quad \mathrm{for\ some}\quad i=0, 1,\ 0\leq r\leq p-1
$$
and
$$
y_1=b_1^jb_2^s\quad \mathrm{for\ some}\quad j=0, 1,\ 0\leq s\leq p-1.
$$
Thus, we have
\begin{eqnarray*}
y_1^{-1}x_1^{-1}hx_1y_1&=&b_2^{-s}b_1^{-j}a_1^ia_2^rb_1^jb_2^s\\
                       &=&b_2^{-s}a_1^{i-j}a_2^ra_1^Jb_2^s\\
                       &=&b_2^{-s}a_1^ia_2^{-r}b_2^s\\
                       &=&a_1^ib_2^sa_2^{-r}b_2^s,
\end{eqnarray*}
and obviously, this last element has length $3$ except in the following cases:
\begin{enumerate}
\item $s=0$. In this case we have $y_1=b_1^j=a_1^j$ so $g=x_1a_1^j\in A$.
\item $r=0$. In this case $g^{-1}hg=a_1^ib_2^{2s}\in A$ and this implies that $b_2^{2s}=1$. As $s\neq 0$, the odd prime $p$ must divide $2s$ which is impossible since $0<s\leq p-1$.
\end{enumerate}
This shows that $A$ is a malnormal subgroup of $G$; similarly $B$ is a malnormal subgroup of $G$.

Now, we are ready to prove that $G$ belongs to the class $\CSX$. Suppose that $M$ is a maximal $\X$-subgroup of $G$. We claim that $M$ is included either in a conjugate of $A$ or in a conjugate of $B$. Recall that, this is true for the case when $M$ is finite. But, at the moment, we only know that $M$ is locally finite. Set
$$
T=\{ Ag:\ g\in G\}\cup\{ Bg:\ g\in G\},
$$
and define a right action of $G$ on $T$ as follows:
$$
(Ag)x=Agx,\quad (Bg)x=Bgx.
$$
For every finite subset $S\subseteq M$, we have
$$
\langle S\rangle \subseteq A^g\ \mathrm{or}  \hspace{0.1cm}\langle S\rangle\subseteq B^g,
$$
for some $g\in G$. Suppose for example that we have $\langle S\rangle\subseteq A^g$. Then $Ag$ is a fixed point of $\langle S\rangle$ and we denote this by
$$
Ag\in \mathrm{Fix}_T(\langle S\rangle).
$$
Hence, for all finite subsets $S\subseteq M$, the set $\mathrm{Fix}_T(\langle S\rangle)$ is non-empty; it is also finite. To see this, suppose $Ag$ and $Ah$ belong to $\mathrm{Fix}_T(\langle S\rangle)$. Then $A^g\cap B^h\neq \{1\}$, and so, $gh^{-1}\in A$ follows from the malnormality of $A$. Consequently,
$$
|\{ Ah:\ Ah\in \mathrm{Fix}_T(\langle S\rangle)\}|\leq |A|=2p.
$$
A similar argument shows that if some coset $Bh$ belongs to $\mathrm{Fix}_T(\langle S\rangle)$, then the number of such elements is at most $2p$. This means that for any finite subset $S\subseteq M$, we have
$$
0\neq |\mathrm{Fix}_T(\langle S\rangle)|\leq 4p.
$$
Define a set
$$
T_M=\{ \mathrm{Fix}_T(\langle S\rangle): S\subseteq M\setminus \{1\}\ \mathrm{and}\ |S|<\infty\}.
$$
Suppose $S_0$ is a finite subset of $M\setminus \{1\}$ such that $\mathrm{Fix}_T(\langle S_0\rangle)$ is as small as possible. Then, for any other $S$, we have
$$
\mathrm{Fix}_T(\langle S_0\cup S\rangle)\subseteq \mathrm{Fix}_T(\langle S_0\rangle),
$$
and therefore,
$$
\mathrm{Fix}_T(\langle S_0\cup S\rangle)=\mathrm{Fix}_T(\langle S_0\rangle).
$$
But,
$$
\mathrm{Fix}_T(\langle S_0\cup S\rangle)\subseteq \mathrm{Fix}_T(\langle S_0\rangle)\cap \mathrm{Fix}_T(\langle S\rangle)
$$
as well which shows that $\mathrm{Fix}_T(\langle S_0\rangle)\subseteq \mathrm{Fix}_T(\langle S\rangle)$. As a result, the intersection of the elements of $T_M$ is non-empty and this in turn means that $M\subseteq A^g$ for some $g\in G$. Now, by the maximality of $M$, we have $M=A^g$, and hence, $M$ is malnormal, proving that $G$ is $\CSX$.
\end{proof}

Note that the variety $\X$ above is metabelian. For the variety $\mathfrak{A}_2$ of metabelian groups, we establish a property weaker than the $\XT$ property; this is proved below.

\begin{proposition}
Let $\X=\mathfrak{A}_2$ be the variety of metabelian groups.  Let $G$ be a $\CSX$-group. Then, for any  $\X$-subgroups $A$ and $B$, with non-abelian  $A\cap B$, the subgroup $\langle A, B\rangle$ is metabelian.
\end{proposition}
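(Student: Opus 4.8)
The plan is to show something slightly stronger than asked: I will produce a \emph{single} maximal metabelian subgroup $M$ of $G$ that contains both $A$ and $B$. Once $\langle A,B\rangle\subseteq M\in\X$, metabelianness of $\langle A,B\rangle$ is immediate, since $\X=\mathfrak{A}_2$ is subgroup-closed. First I would use that $\mathfrak{A}_2$ is a variety, hence inductive, so Zorn's lemma furnishes a maximal $\X$-subgroup $M$ of $G$ with $A\subseteq M$; the hypothesis $G\in\CSX$ then makes $M$ malnormal. Because $A\cap B$ is non-abelian, its derived subgroup is non-trivial, so I may fix a single non-identity element $c\in(A\cap B)'$. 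The two facts I will squeeze out of this element are that $c\in A\subseteq M$ and that $c\in B'$, the latter being abelian since $B$ is metabelian.

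The crux of the argument is a malnormality observation that I would isolate as the key lemma: for any non-identity $c\in M$, the whole centralizer $C_G(c)$ lies in $M$. Indeed, if $w$ commutes with $c$ then $wcw^{-1}=c\in M$, which is exactly the statement $c\in M^{w}$; hence $c\in M\cap M^{w}$ is non-trivial and malnormality of $M$ forces $w\in M$. This is the engine of the proof and is really the same mechanism underlying the first half of Proposition \ref{CSX}.

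With the lemma in hand I would exploit that $c$ is (a product of) commutators in the metabelian group $B$: since $c\in B'$ and $B'$ is abelian, every conjugate $c^{b}$ with $b\in B$ again lies in $B'$ and therefore commutes with $c$, so $c^{b}\in C_G(c)\subseteq M$. A \emph{second} application of malnormality then pulls $b$ itself into $M$: from $c^{b}=b^{-1}cb\in M$ we read off $c\in M^{b^{-1}}$, whence $c\in M\cap M^{b^{-1}}$ is non-trivial and $b\in M$. As $b\in B$ was arbitrary, $B\subseteq M$, and therefore $\langle A,B\rangle\subseteq M$ is metabelian.

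The step I expect to be the main obstacle, and the place where the hypotheses are genuinely used, is the passage from ``$C_G(c)\subseteq M$'' to ``$B\subseteq M$''. The centralizer lemma only directly captures elements that commute with $c$ (namely $B'$), whereas an arbitrary $b\in B$ need not commute with $c$ at all. The metabelian structure is exactly what bridges this gap: conjugation by $b$ keeps $c$ inside the abelian $B'$, so it is $c^{b}$ rather than $b$ that lands in $M$ first, and only a second malnormality move recovers $b$. This also explains why the hypothesis must be ``$A\cap B$ non-abelian'' rather than merely ``$A\cap B\neq\{1\}$'': without a non-trivial commutator $c\in B'$ to anchor the argument, there is no element whose $B$-conjugates are confined to an abelian subgroup, and the two-step malnormality descent collapses.
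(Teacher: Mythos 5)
Your proof is correct and follows essentially the same route as the paper's: both fix a maximal metabelian $M\supseteq A$, anchor the argument on a non-trivial element of $(A\cap B)'$ (the paper's $[x,y]$ is exactly such an element), and pull each $b\in B$ into $M$ by two successive applications of malnormality, using that $B'$ is abelian. Your packaging of the first step as the lemma ``$C_G(c)\subseteq M$ for $1\neq c\in M$ malnormal'' is a clean reformulation of the paper's computation with $[[x,b],[x,y]]=1$, but the underlying mechanism is identical.
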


\begin{proof}
Let $A$ and $B$ be metabelian subgroups of $G$ such that $A\cap B$ is not abelian; let $x,y\in A\cap B$ with $[x,y]\neq 1$ and let $M$ be a maximal metabelian subgroup of $G$ containing $A$. We will show that $B\subseteq M$. To this end, let $b\in B$ and note that since $x$, $y$, and $b$ are all elements of the metabelian group $B$,
$$
1=[[x,b],[x,y]]=[x,b]^{-1}[x,y]^{-1}[x,b][x,y]\in M.
$$
Because $[x,y]\in M$, we deduce that
$$
1\neq [x,y]^{-1}\in M\cap M^{[x,b]^{-1}}.
$$
As $M$ is malnormal in $G$, we deduce that $[x,b]=x^{-1}b^{-1}xb\in M$. But $x\in M$ so $1\neq x\in M\cap M^b$; using  malnormality of $M$ once again, we see that $b\in M$. Therefore, $B\subseteq M$ and so, $\langle A,B \rangle$ is metabelian.
\end{proof}

\section{Equational Domains}

As previously noted, the main idea of the present work originates in problems of algebraic geometry over groups. For completeness, we review the necessary background and then show that any inductive class containing all cyclic groups gives rise to a large class of {\em equational domains}.

We use the same notations as in \cite{BMR}, \cite{DMR}, and \cite{Omar-Shah}. Suppose $G$ is a group and $X=\{x_1, x_2, \ldots, x_n\}$ is a set of variables. Let $\mathbb{F}[X]$ be the free group generated by $X$ and $G[X]=G\ast \mathbb{F}[X]$ be the free product of $G$ and $\mathbb{F}[X]$. Every element of $G[X]$ is a group word in the variables $x_1, x_2, \ldots, x_n$ with coefficients from $G$. If $w(x_1, \ldots, x_n)\in G[X]$, then $w(x_1, \ldots, x_n)\approx 1$ is called a group equation. The set
$$
\{ (g_1, \ldots, g_n)\in G^n:\ w(g_1, \ldots, g_n)=1\}
$$
is the solution set of the given equation in $G$. A system of equations with coefficients from $G$ is any set of equations $S\approx 1$, where $S\subseteq G[X]$. The {\em algebraic set} corresponding to this system is the set of all common solutions of all equations in $S\approx 1$ in $G^n$. We denote this algebraic set by $V_G(S)$.

Suppose, for every $S\subseteq G[X]$, there exists a finite subset $S_0\subseteq S$ such that $V_G(S)=V_G(S_0)$. In this case, we say that the group $G$ is {\em equationally Noetherian}. If $G$ is equationally Noetherian, then every algebraic set can be decomposed uniquely as a finite union of {\em irreducible } algebraic sets. The group $G$ is called a {\em domain} if and only if, for every natural number $n$, the union of every two algebraic sets in $G^n$ is again an algebraic set.  There is another definition for the concept of a domain in terms of {\em zero divisors}. An element $x\in G$ is called a {\em zero divisor}, if there exists a non-identity element $y\in G$ such that, for every $g\in G$, we have $[x^g, y]=1$. In \cite{DMR}, it is proved that a $G$ is  a domain if and only if $G$ does not contain any non-trivial zero divisor. It is not hard to see that $G$ is a domain if and only if it satisfies this property: For every non-trivial normal subgroup $K\leq G$,  the centralizer $C_G(K)$ is trivial.  As an example, every non-abelian free group is a domain. This can be generalized to $\mathrm{CSA}$ groups. It is proved  that every $\mathrm{CSA}$ group is a domain (see also \cite{BMR}), a result which is generalized to the case $\CSX$ groups, $\X$ being the variety of nilpotent groups of class at most $k$ (see \cite{Omar-Shah}).  Now, we present a more general result.

\begin{theorem}
Let $\X$ be an inductive class of groups which contains all cyclic groups. If $G$ is a $\CSX$-group that is not an $\X$-group, then $G$ is an equational domain.
\end{theorem}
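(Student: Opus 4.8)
The plan is to use the characterization recorded just above: a group is a domain exactly when $C_G(K)=\{1\}$ for every non-trivial normal subgroup $K\trianglelefteq G$. So I would argue by contradiction, assuming that a $\CSX$-group $G$ with $G\notin\X$ fails to be a domain, and then derive the contradiction $G\in\X$. Concretely, suppose $K\trianglelefteq G$ is non-trivial and there is a non-identity element $y\in C_G(K)$; fix also a non-identity element $k\in K$.

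The one structural fact I would isolate first is that $C_G(K)$ is itself normal in $G$ (the centralizer of a normal subgroup is always normal). Consequently every conjugate $y^g$ still lies in $C_G(K)$ and therefore commutes with $k$. This is the substitute for the step in the classical $\mathrm{CSA}$ argument where two commuting elements generate an abelian, hence $\X$-, subgroup: that step is unavailable here because $\X$ is only assumed to contain the cyclic groups, not all abelian groups. The remedy is to push the whole argument through the malnormality of a single maximal $\X$-subgroup, using only that the cyclic group $\langle k\rangle$ lies in $\X$.

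Since $\langle k\rangle\in\X$ and $\X$ is inductive, Zorn's lemma gives a maximal $\X$-subgroup $M$ containing $k$, and $M$ is malnormal by the $\CSX$ hypothesis. I would then apply malnormality twice. First, for each $g\in G$ the relation $[y^g,k]=1$ gives $k^{y^g}=k$, so $k$ lies in $M\cap M^{y^g}$; as $k\neq 1$, malnormality forces $y^g\in M$ for every $g$ (in particular $y\in M$). Second, fix $g\in G$: from $y\in M$ we get $y^g\in M^g$, while the first pass gives $y^g\in M$, so $y^g$ is a non-identity element of $M\cap M^g$, and malnormality yields $g\in M$. As $g$ was arbitrary, $M=G$, whence $G=M\in\X$ --- a contradiction. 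The main obstacle is exactly the missing ``abelian $\subseteq\X$'' property; once one realizes that normality of $C_G(K)$ lets every conjugate of $y$ commute with the single element $k$, the two malnormality passes are routine, the only care needed being to track which element lands in which conjugate of $M$.
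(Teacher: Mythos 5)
Your proof is correct and follows essentially the same route as the paper: the paper starts from the equivalent zero-divisor formulation (non-identity $x,y$ with $[x^g,y]=1$ for all $g$) rather than from $C_G(K)$, takes a maximal $\X$-subgroup $M$ containing the cyclic group generated by one of the two elements, and applies malnormality twice exactly as you do to force $G=M\in\X$. Your extra observation that $C_G(K)$ is normal merely recovers the "for all conjugates" condition that the zero-divisor definition supplies directly, so the two write-ups differ only in which of the paper's two stated characterizations of a domain they begin with.
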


\begin{proof}
Suppose, for a contradiction, that $G$ contains a non-trivial zero divisor. That is, $G$ contains a pair of non-identity elements $x,y$ such that, for all $g\in G$,
$[x^g,y]=1$. The subgroup $\langle y\rangle$ of $G$ is cyclic, and as such, it is an $\X$-subgroup of $G$ that contains $y$. Since we are assuming that $\X$ is closed under unions of chains, Zorn's lemma implies the existence of a maximal $\X$-subgroup $M$ of $G$ that contains $y$. Observe that for all $g\in G$, we have
$$
1=[x^g,y]=(x^g)^{-1}y^{-1}x^gy\in M;
$$
since $y\in M$ as well, we obtain the following for every $g\in G$
$$
1\neq y^{-1}\in M\cap M^{(x^g)^{-1}}.
$$
Because $M$ is malnormal in $G$, we conclude that $x^g\in M$ for all $g\in G$. In particular, $x\in G$. But then, given any $g\in G$
$$
1\neq x\in M\cap M^{g^{-1}}.
$$
Using the fact that $M$ is malnormal one more time, we conclude that $G\subseteq M$, and hence, $G=M$. This contradicts our assumption that $G$ is not an $\X$-group which completes the proof.
\end{proof}

\section{Free product}

Throughout this section, $\X$ denotes a variety of groups. Using the free product construction, we obtain further $\CSX$-groups; the same idea can be used to produce new examples $\XT$-groups. In what follows, the free product of a family $\{ G_i\}_{i\in I}$ of groups is denoted by $\prod^{\ast}_{i\in I}G_i$. Recall that the free product of any two groups of orders at least three contains the free group $\mathbb{F}_2$. This fact will be used in the proof of the next result.

\begin{theorem}\label{Free}
Suppose $A$ and $B$ are $\CSX$-groups containing no involutions. Then the free product $G=A\ast B$ belongs to $\CSX$.
\end{theorem}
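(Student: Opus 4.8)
The plan is to reduce the problem to the structure theory of subgroups of free products. First I would dispose of a trivial case: if $\X$ is the variety of all groups, then the only maximal $\X$-subgroup of $G$ is $G$ itself, which is malnormal, and so $G\in\CSX$. Hence I may assume $\X$ is a \emph{proper} variety, satisfying some nontrivial law. Since a nonabelian free group fails every nontrivial law, this yields the crucial fact $\mathbb{F}_2\notin\X$; because $\X$ is subgroup-closed, no $\X$-subgroup of $G$ can contain a copy of $\mathbb{F}_2$. I would also record that $G$ is involution-free: every element of finite order in $A\ast B$ is conjugate into a factor, so an involution in $G$ would force an involution in $A$ or $B$, contrary to hypothesis.

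The key structural step is to classify the $\X$-subgroups of $G$. By the Kurosh subgroup theorem, any subgroup $H\le G$ is a free product of a free group with conjugates of subgroups of $A$ and of $B$. Using the fact quoted just before the theorem (a free product of two groups each of order at least three contains $\mathbb{F}_2$), together with its standard strengthening that \emph{any} free product with at least two nontrivial factors contains $\mathbb{F}_2$ unless it is $C_2\ast C_2$, I conclude that an $\mathbb{F}_2$-free subgroup $H$ must be a single conjugate of a subgroup of a factor, infinite cyclic, or a copy of $C_2\ast C_2$. The last possibility is excluded because $G$ has no involutions, and this is precisely where the involution hypothesis is essential. Thus every $\X$-subgroup of $G$, and in particular every maximal $\X$-subgroup $M$, is either conjugate into a factor or infinite cyclic generated by a hyperbolic element, which splits the proof into two cases.

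In the first case $M$ lies in a conjugate of a factor; replacing $M$ by a suitable conjugate (malnormality being conjugation-invariant and the property of being a maximal $\X$-subgroup being preserved under conjugation) I may assume $M\le A$. Then $M$ is a maximal $\X$-subgroup of $A$, since any strictly larger $\X$-subgroup of $A$ would be a strictly larger $\X$-subgroup of $G$. As $A\in\CSX$, the subgroup $M$ is malnormal in $A$. I would then invoke the standard fact that each free factor is malnormal in the free product (distinct vertex stabilizers of the Bass--Serre tree meet trivially), so $A$ is malnormal in $G$. Finally, malnormality is transitive, by the elementary argument already used in the proof that $\mathrm{CS}^2\X=\CSX$, whence $M$ is malnormal in $G$.

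The remaining, and hardest, case is $M=\langle w\rangle$ infinite cyclic with $w$ hyperbolic. Here $\X$ contains the infinite cyclic group, so any cyclic overgroup of $M$ is again an $\X$-subgroup, and maximality forces $M$ to be root-closed, i.e.\ a maximal cyclic subgroup. To prove malnormality I would work in the Bass--Serre tree: if $1\neq c\in M\cap M^{g}$ then $g^{-1}w^{m}g=w^{k}$ for some nonzero $k,m$, equality of translation lengths gives $|k|=|m|$, and comparing axes shows that $g$ stabilizes the axis of $w$ setwise. The orientation-preserving part of this stabilizer consists of translations lying in $M$, while an orientation-reversing element stabilizing the axis would square to an axis-fixing (hence edge-fixing) element and so produce an involution, which is impossible in $G$. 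Therefore $g\in M$, and $M$ is malnormal. I expect this last case to be the main obstacle: controlling the normalizer of a maximal hyperbolic cyclic subgroup and checking that the no-involution hypothesis is exactly what rules out the orientation-reversing, infinite-dihedral behaviour.
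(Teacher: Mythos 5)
Your proposal is correct and follows essentially the same route as the paper's proof: both apply the Kurosh subgroup theorem to a maximal $\X$-subgroup $M$, dispose of the case where $M$ contains $\mathbb{F}_2$ (which forces $\X$ to be the variety of all groups), handle the case $M\le A^x$ or $M\le B^y$ via malnormality of the free factors plus the $\CSX$ hypothesis on $A$ and $B$, and use the absence of involutions exactly to exclude the inverting/dihedral behaviour in the infinite cyclic case. The only difference is one of implementation: in the cyclic case you work with axes and orientation in the Bass--Serre tree, while the paper compares cyclically reduced lengths to obtain $g^{-1}z^ng=z^{\pm n}$ and then argues via centralizers and maximality --- both are sound.
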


\begin{proof}
Suppose $M$ is a maximal $\X$-subgroup of $G$. By the Kurosh Subgroup Theorem, $M$ has the form
$$
M=\mathbb{F}[X]\ast \left(\prod^{\ast}_{i\in I}K_i^{x_i}\right)\ast\left(\prod^{\ast}_{j\in J}L_j^{y_j}\right),
$$
where $X\subseteq G$, $\mathbb{F}[X]$ is the free group generated by $X$, each $K_i$ is a non-trivial subgroup of $A$, each $L_j$ is a non-trivial subgroup of $B$, and, every $x_i$ and $y_j$ is an element of $G$. We have a few cases to be considered, these are discussed below:
\begin{enumerate}
\item Suppose the above free product decomposition of $M$ contains at least two free factors. Then, the free group $\mathbb{F}_2$ is a subgroup of $M$ and this means that $\mathbb{F}_2\in \X$. Therefore, $\X$ is the variety of all groups. Consequently, $M=G$ which is trivially malnormal.
\item Assume $M=K_i^{x_i}$ or $M=L_j^{y_j}$ for some $K_i\leq A$ or some $L_j\leq B$ and elements $x_i, y_j\in G$. Without loss of generality, we may assume that $M=K_i^{x_i}$. This means that $M\subseteq A^x$ for some $x\in G$. Suppose for some $g\in G$, we have $M\cap M^g\neq \{1\}$. Then, $(M\cap M^g)^{x^{-1}}\neq \{1\}$, and hence,
$$
M^{x^{-1}}\cap M^{gx^{-1}}\neq \{1\}.
$$
We know that $M^{x^{-1}}\subseteq A$ and $M^g\subseteq A^{xg}$ so $M^{gx^{-1}}\subseteq A^{xgx^{-1}}$. Consequently,
$$
A\cap A^{xgx^{-1}}\neq \{1\}.
$$
As $A$ is a free factor of $G$, we must have $xgx^{-1}\in A$. Hence, $g\in A^x$. But, according to our assumption, $A^x$ is a $\CSX$-group. Since $M$ is a maximal $\X$-subgroup of $A^x$, we may conclude that $M$ is malnormal in $A^x$, and as such, the assumption $M\cap M^g\neq 1$ implies that $g\in M$. This proves that $M$ is malnormal in $G$.
\item The last case is the case when $M=\langle z\rangle$ is an infinite cyclic group. If the cyclically reduced length of $z$ is equal to one,
then $z$ belongs to a conjugate of $A$ or a conjugate of $B$. So, we can use a similar argument as in the previous case. Therefore, assume that the cyclically reduced length of $z$ is bigger than one. Let $M\cap M^g\neq \{1\}$ for some $g\in G$. Then, there are two non-zero integers $m$ and $n$ such that $g^{-1}z^ng=z^m$. Comparing the cyclically reduced lengths of the two sides, we conclude that $m=\pm n$. If $m=n$, then $gz^n=z^ng$, and thus, $[g, z^n]=1$. This means that $g$ commutes with all elements of $M$, and so, the subgroup $\langle g, M\rangle$ is abelian. Recall that $M$ is an infinite cyclic group which belongs to $\X$, so every abelian group is an element of $\X$. Now, as $M$ is a maximal $\X$-subgroup of $G$, we must have $M=\langle g, M\rangle$, and accordingly, $g\in M$. It remains to address the case when $m=-n$. In this case, we have $g^{-1}z^ng=z^{-n}$, and hence, $[g^2, z^n]=1$. We know that $g^2\neq 1$ as neither of $A$ nor $B$  contains any involution. Hence, $g$ commutes with $z$, and again, the same argument can be employed to deduce that $g\in M$.
\end{enumerate}
Consequently, $G$ belongs to $\CSX$.
\end{proof}

\section{ $\CSX$ and $\XT$ as Universal Classes }

The classes $\mathrm{CT}$ and $\mathrm{CSA}$  are universal; they can be axiomatize by universal sentences in the first order language of groups (see \cite{Fine}). The same is true for the classes $\XT$ and $\CSX$ when $\X$ is the variety of nilpotent groups of class at  most $k$ (see \cite{Shah}). In this section, we generalize this to a large class of varieties; finitely based varieties $\X$ such that the $2$-generator relatively free group of $\X$ is finitely presented. Note that this includes  every finitely based locally finite variety as well as every virtually nilpotent variety. Recall that if a variety is defined by a finite set of identities
$$
\X=\mathrm{Var}(w_1\approx 1, \ldots, w_m\approx 1),
$$
then it is defined by the single identity
$$
w_1(x_1,\ldots, x_n)w_2(y_1, \ldots, y_n)\cdots \approx 1
$$
where $x_1, \ldots, x_n, y_1, \ldots, y_n, \ldots $ are distinct variables. Accordingly, in what follows, every finitely based variety is assumed to be defined by a single identity. For every variety $\X$, the $2$-generator relatively free element of $\X$ will be denoted by $F_2(\X)$.

\begin{theorem}\label{Universal}
Suppose $\X$ is a finitely based variety such that  $F_2(\X)$ is finitely presented. Then, the class $\XT$ is universal. Further, if $\CSX\subseteq \XT$, then $\CSX$ is universal as well.
\end{theorem}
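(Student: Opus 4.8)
The plan is to produce, for each of $\XT$ and $\CSX$, a finite list of genuinely universal first-order sentences whose models are exactly the groups in the class. By the remark preceding the statement I may assume $\X=\mathrm{Var}(w)$ for a single law $w=w(x_1,\dots,x_n)$, so that a group lies in $\X$ precisely when it satisfies $w$. The one place where the hypothesis on $F_2(\X)$ enters is the following translation of two-generated membership into a quantifier-free formula. Writing $F_2(\X)=\langle s,t\mid r_1,\dots,r_k\rangle$, I would show that for any group $G$ and any $a,x\in G$,
$$\langle a,x\rangle\in\X\iff\bigwedge_{i=1}^{k}r_i(a,x)=1.$$
The point is that the right-hand side says precisely that the assignment $s\mapsto a,\ t\mapsto x$ kills the verbal subgroup $V(F_2)$ (the normal closure of the $r_i$), i.e.\ that $\langle a,x\rangle$ is a quotient of $F_2(\X)$; since $\langle a,x\rangle$ is two-generated and $\X$ is closed under quotients, this is equivalent to $\langle a,x\rangle\in\X$. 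I denote this quantifier-free formula by $\Phi(a,x)$ and record that $\Phi(a,x)\leftrightarrow\Phi(a,x^{-1})$, since $\langle a,x\rangle=\langle a,x^{-1}\rangle$.

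Next I would encode the characterization of $\XT$ given in Proposition \ref{Cent-1} --- that $G\in\XT$ iff for each $a\neq1$ the set $C_{\X}(a)$ is empty or an $\X$-subgroup --- using only $\Phi$ and the law $w$. Closure of $C_{\X}(a)$ under multiplication (closure under inverses and membership of $1$ being automatic) is the universal sentence
$$\sigma_1:\ \forall a,x,y\,\big[(a\neq1\wedge\Phi(a,x)\wedge\Phi(a,y))\to\Phi(a,xy)\big],$$
which together with membership expresses that $C_{\X}(a)$ is a subgroup. The decisive observation is that ``$C_{\X}(a)\in\X$'' need not be phrased as membership of the unboundedly generated group $C_{\X}(a)$: since $\X=\mathrm{Var}(w)$, it is equivalent to $C_{\X}(a)$ satisfying the single law $w$, that is, to
$$\sigma_2:\ \forall a,c_1,\dots,c_n\,\Big[\big(a\neq1\wedge\textstyle\bigwedge_{i=1}^{n}\Phi(a,c_i)\big)\to w(c_1,\dots,c_n)=1\Big].$$
Both sentences hold vacuously when $C_{\X}(a)$ is empty, so $G\models\sigma_1\wedge\sigma_2$ exactly when every $C_{\X}(a)$ (for $a\neq1$) is empty or an $\X$-subgroup; by Proposition \ref{Cent-1} this is equivalent to $G\in\XT$. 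As $\sigma_1$ and $\sigma_2$ are universal, $\XT$ is a universal class.

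For the second assertion, assuming $\CSX\subseteq\XT$, I would apply Proposition \ref{CSX}: a group $G$ belongs to $\CSX$ if and only if $G\in\XT$ and, for all $a\neq1$ and all $z$, the condition $\langle a,a^{z}\rangle\in\X$ forces $\langle a,z\rangle\in\X$. This implication is again about two-generated subgroups, so it is captured by the universal sentence
$$\sigma_3:\ \forall a,z\,\big[(a\neq1\wedge\Phi(a,z^{-1}az))\to\Phi(a,z)\big].$$
Combining the two directions of Proposition \ref{CSX} with the hypothesis $\CSX\subseteq\XT$ gives $\CSX=\mathrm{Mod}(\sigma_1,\sigma_2,\sigma_3)$, which is universal.

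The main obstacle --- and the reason the two hypotheses are stated exactly as they are --- is that a naive rendering of $\XT$ would require expressing ``$\langle a,x,y\rangle\in\X$'' as a first-order formula, which is not available to us because $F_3(\X)$ need not be finitely presented. The conceptual heart of the argument is that finite presentability of $F_2(\X)$ already makes two-generated membership quantifier-free, while the finite basis $w$ lets me certify ``$C_{\X}(a)\in\X$'' by testing the single law $w$ on $n$-tuples drawn from $C_{\X}(a)$; thus only two-generated membership, never three- or more-generated membership, is ever needed. The remaining points --- that $\sigma_1\wedge\sigma_2$ really does reproduce the ``empty or $\X$-subgroup'' dichotomy, together with the bookkeeping around the empty case and cyclic subgroups --- are routine once this is in place.
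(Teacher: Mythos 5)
Your proposal is correct and follows essentially the same route as the paper: finite presentability of $F_2(\X)$ turns two-generator membership in $\X$ into a quantifier-free formula, and Propositions \ref{Cent-1} and \ref{CSX} then convert the $\XT$- and malnormality conditions into universal sentences. The only (harmless) differences are that you test the relators $r_i$ directly instead of first rewriting them as $w$-values, and your single sentence $\sigma_2$ replaces the paper's infinite family $\X^n$, which in fact gives the slightly stronger conclusion that the axiomatization is finite.
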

\begin{proof}
Suppose $\X=\mathrm{Var}(w\approx 1)$. By hypothesis, we have a finite presentation
$$
F_2(\X)=\langle X|R\rangle
$$
with $|X|=2$. Let $w(\mathbb{F}_2)$ denotes the verbal subgroup of the free group $\mathbb{F}_2$ corresponding to the word $w$. Also, suppose $\langle R^{\mathbb{F}_2}\rangle$ is the normal closure of $R$ in $\mathbb{F}_2$. Thus,
$$
F_2(\X)=\frac{\mathbb{F}_2}{w(\mathbb{F}_2)}\cong\frac{\mathbb{F}_2}{\langle R^{\mathbb{F}_2}\rangle},
$$
As both sides of the above isomorphism are relatively free, the subgroup $\langle R^{\mathbb{F}_2}\rangle$ is verbal, and hence,
$$
w(\mathbb{F}_2)=\langle R^{\mathbb{F}_2}\rangle.
$$
Since $R\subseteq w(\mathbb{F}_2)$, there is a finite subset $S$ of $\mathbb{F}_2$ such that $R=w(S)$. Therefore,
$$
w(\mathbb{F}_2)=\langle w(S)^{\mathbb{F}_2}\rangle.
$$
Now, let $A=\langle X\rangle$ be a finitely generated group with $|X|=2$ so that $A=\mathbb{F}_2/K$ for some normal subgroup $K$ of $\mathbb{F}_2$. Then, for some finite subset $S$ of $\mathbb{F}_2$, we have
$$
w(A)=w(\frac{\mathbb{F}_2}{K})=\frac{w(\mathbb{F}_2)K}{K}=\frac{\langle w(S)^{\mathbb{F}_2}\rangle K}{K}\cdot
$$
Accordingly,
$$
w(A)=\langle w(\bar{S})^{\mathbb{F}_2/K}\rangle,
$$
where $\bar{S}=\{ sK:\ s\in S\}$ is finite. As such, $w(A)=\langle w(\bar{S})^A\rangle$. Hence, we proved that  there is a finite subset $S\subseteq \mathbb{F}_2$ (which depends only on $\X$) such that, for every $2$-generated group $A$,  $w(A)=\langle w(\bar{S})^A\rangle$.   This means that in order to verify that a $2$-generated group $A$ belongs to $\X$, it is enough to verify finitely many equalities $w(S)=\{1\}$ inside the group $A$.

Now, we are ready to show that the class $\XT$ is universal. We will use the above result together with Proposition \ref{Cent-1}. Note that, according to what we just proved, there is a finite subset $S(x, y)$  of the free group generated by $x$ and $y$ (which depends only on $\X$) such that  a $2$-generator group $A=\langle a, b\rangle$ belongs to $\X$ if and only if $w(S(a,b))=\{1\}$. Define the first order formula $Q(x,y)$ to be the following:
$$
\bigwedge_{x_1,\ldots,x_n\in S(x,y)}\left(w(x_1,\ldots,x_n)\approx 1\right).
$$
Consequently, in a group $G$, $Q(a,b)$ is true if and only if $\langle a,b\rangle\in \X$.

Now, we use Proposition \ref{Cent-1}: Given a group $G$, the property that for all non-identity elements $x$ of $G$ either $C_G^{\X}(x)=\emptyset$ or $C_G^{\X}(x)\leq G$ translates to the following sentence which refer to  as $\mathrm{Sub}_{\X}$:
$$
\forall x \forall y \forall z (x\neq 1 \land Q(x,y)\land Q(x,z)\longrightarrow Q(x,y^{-1}z)).
$$
Moreover, the property that $C_G^{\X}(x)\in \X$ translates to the following set of sentences, one for each $n\geq 1$ which can be denoted by $\X^n$:
$$
\forall x_1 \forall x_2\ldots\forall x_n \forall x(x\neq 1 \wedge\bigwedge_{i=1}^nQ(x,x_i)\longrightarrow w(x_1,\ldots,x_n)\approx 1).
$$
Hence, the property of being $\XT$ is axiomatized by the above set of universal sentences. In other words, a group $G$ belongs to $\XT$ if and only if it is a model of the set
$$
\{ \mathrm{Sub}_{\X}, \X^n:\ n\geq 1\}.
$$

Next, suppose that $\CSX\subseteq \XT$. According to \ref{CSX}, we know that an $\XT$-group $G$ belongs to $\X$ if and only if
$$
\forall x\neq 1 \forall z (\langle x,x^z \rangle\in \X\longrightarrow \langle x,z\rangle\in \X)
$$
and this translates to ($\mathrm{Mal}_{\X}$):
$$
\forall x\forall z(x\neq 1\wedge Q(x,x^z)\longrightarrow Q(x,z)).
$$
As such, the property of being $\CSX$ is automatized by the set
$$
\{ \mathrm{Sub}_{\X}, \mathrm{Mal}_{\X}, \X^n:\ n\geq 1\}.
$$
\end{proof}

There are many examples of varieties satisfying the assumptions of \ref{Universal} among which are the varieties of nilpotent groups of class at most $k$, every finitely based locally finite variety (equivalently, every locally finite variety with finite axiomatic rank) and in general, every virtually finite nilpotent variety. However, we are not sure about the existence of any more non-trivial examples. An old question of A. Olshanskii about the existence of a non-trivial example of finitely presented relatively free group which is not virtually nilpotent is still unsolved (see Problem 11.73 in  \cite{Kur}). One can use the above theorem to produce infinitely many examples of $\XT$- and $\mathrm{CSX}$-groups.

\begin{corollary}
Suppose $\X$ is a finitely based variety such that the relatively free element $F_2(\X)$ is finitely presented. Then, any ultra-product of $\XT$-groups is $\XT$. If further, $\CSX\subseteq \XT$, then every ultra-product of $\CSX$-groups is $\CSX$.
\end{corollary}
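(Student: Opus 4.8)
The plan is to obtain this corollary as an immediate consequence of the preceding Theorem \ref{Universal} together with the fundamental theorem of ultraproducts ({\L}o\'{s}'s theorem). The guiding principle is that any class of groups definable by a set of first-order sentences is automatically closed under the formation of ultraproducts, and Theorem \ref{Universal} furnishes precisely such a definition of $\XT$ (and of $\CSX$).

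First I would recall from Theorem \ref{Universal} that, under the stated hypotheses on $\X$, the class $\XT$ consists exactly of the models of the set of universal sentences
$$
\Sigma_{\XT}=\{\,\mathrm{Sub}_{\X},\ \X^n:\ n\geq 1\,\}.
$$
Given a family $\{G_i\}_{i\in I}$ of $\XT$-groups and an ultrafilter $\mathcal{U}$ on $I$, I would form the ultraproduct $G=\prod_{i\in I}G_i/\mathcal{U}$ and check each axiom in turn: for every $\sigma\in\Sigma_{\XT}$ we have $G_i\models\sigma$ for all $i$, so $\{\,i\in I:\ G_i\models\sigma\,\}=I\in\mathcal{U}$, and {\L}o\'{s}'s theorem then gives $G\models\sigma$. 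Since this holds for every $\sigma$, the ultraproduct $G$ satisfies $\Sigma_{\XT}$ and hence lies in $\XT$.

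For the second statement I would repeat the identical argument, replacing the axiom set by the one that Theorem \ref{Universal} provides for $\CSX$ when $\CSX\subseteq\XT$, namely
$$
\Sigma_{\CSX}=\{\,\mathrm{Sub}_{\X},\ \mathrm{Mal}_{\X},\ \X^n:\ n\geq 1\,\}.
$$
The same application of {\L}o\'{s}'s theorem shows that any ultraproduct of $\CSX$-groups again models every sentence of $\Sigma_{\CSX}$, and therefore belongs to $\CSX$.

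I do not expect any genuine difficulty here, because the substantive work---establishing that $\XT$ and $\CSX$ admit first-order (indeed universal) axiomatizations---has already been carried out in Theorem \ref{Universal}; closure under ultraproducts is then a purely formal model-theoretic consequence. The only point meriting attention is the routine verification that each of $\mathrm{Sub}_{\X}$, $\mathrm{Mal}_{\X}$, and the $\X^n$ is a legitimate first-order sentence in the language of groups, but this is exactly what the formulas $Q(x,y)$ constructed in the proof of Theorem \ref{Universal} guarantee.
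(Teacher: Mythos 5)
Your proof is correct and is exactly the argument the paper intends: the corollary is stated as an immediate consequence of Theorem \ref{Universal}, since a class axiomatized by first-order sentences is closed under ultraproducts by {\L}o\'{s}'s theorem. No gaps; this matches the paper's (implicit) reasoning.
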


As an example, if we have two $\CSX$-groups $A$ and $B$ without involutions, then for any set $I$ and any ultra-filter $\mathcal{U}$ over $I$, the ultra-power $(A\ast B)^I/\mathcal{U}$ is a new $\CSX$-group. Consequently, we have an infinite supply of new $\CSX$-groups (and hence, equational domains).

\section{Residually $A$-free groups}
It is well-known that the classes of $\mathrm{CT}$- and $\mathrm{CSA}$-groups are closely related to the classes of residually free and fully residually free groups, and therefore play an important role in the study of the universal theory of non-abelian free groups (see \cite{BB}, \cite{Champ}, and \cite{Rem}). In \cite{BB} it is proved that in the presence of the property of being residually free, every $\mathrm{CT}$-group belongs to the class $\mathrm{CSA}$, a fact that is generalized to the case $\X=\mathfrak{N}_k$ in \cite{Shah}. We shall extend all this theory to a wider class of varieties in this final section.

We begin by reviewing some basic concepts. Let $A$ be a group. An $A$-free group is a free product of a set of copies of $A$. Clearly, if $A$ is an infinite cyclic group, then $A$-free groups are the ordinary free groups.

The following two facts can be verified (see \cite{Shah}):
\begin{enumerate}
      \item If $|A|\geq 3$, then $A\ast A\ast A$ embeds in $A\ast A$.
      \item If $|A|\geq 3$, then the free group $\mathbb{F}_2$ embeds in $A\ast A$.
\end{enumerate}
In what follows we shall always assume that $A$ has no involution.   A group $G$ is residually $A$-free if, for every non-identity element $g$ in $G$, there is a homomorphism from $G$ to an $A$-free group $F$ such that $g$ maps to a non-identity element of $F$. We say that $G$ is fully residually $A$-free if, for any finite collection of distinct non-trivial elements $g_1,\ldots ,g_n$ of $G$, there is a homomorphism from $G$ to an $A$-free group $F$ that maps each of $g_1,\ldots ,g_n$ to a non-identity element of $F$.   Observe that every (fully) residually free group is (fully) residually $A$-free whenever $A$ is a group having no involutions.

\begin{theorem}
Suppose $\X$ is a finitely based variety such that the relatively free element $F_2(\X)$ is finitely presented.  Let $A\in\X$ be a non-trivial finitely generated equationally Noetherian group with no involutions and assume that $\CSX\subseteq \XT$.  Then, every fully residually $A$-free group belongs to  $\CSX$.
\end{theorem}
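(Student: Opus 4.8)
The plan is to combine the universal axiomatisation of $\CSX$ furnished by Theorem~\ref{Universal} with the discrimination encoded in full residual $A$-freeness. The first task is to check that every $A$-free group lies in $\CSX$. Since $A\in\X$, the only maximal $\X$-subgroup of $A$ is $A$ itself, which is vacuously malnormal, so $A\in\CSX$; and because $A$ has no involutions while every torsion element of a free product is conjugate into a factor, no finite free product of copies of $A$ has an involution either. Hence Theorem~\ref{Free}, applied inductively (writing $A^{\ast(k+1)}=A^{\ast k}\ast A$), shows that every finite free product of copies of $A$ belongs to $\CSX$. An arbitrary $A$-free group is the directed union of its finite sub-free-products, and as $\X$ is a variety (hence inductive) with $\CSX\subseteq\XT$, the class $\CSX$ is itself inductive; therefore every $A$-free group belongs to $\CSX$.

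Next I would record the exact shape of the universal axioms from Theorem~\ref{Universal}. Under the standing hypotheses, $\CSX$ is axiomatised by $\mathrm{Sub}_{\X}$, $\mathrm{Mal}_{\X}$, and the sentences $\X^n$ $(n\geq1)$, each of the form $\forall\bar{u}\,(P(\bar u)\to C(\bar u))$, where the premise $P$ is a single inequation $x\neq1$ conjoined with finitely many equations, and, crucially, the conclusion $C$ is a \emph{finite conjunction of equations} (finiteness of $Q$, hence of $C$, is exactly what the finite presentability of $F_2(\X)$ buys us).

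The heart of the argument is then to show a fully residually $A$-free group $G$ satisfies each such sentence. Suppose some axiom fails in $G$: there is a tuple $\bar a$ with $a_x\neq1$, with all premise-equations $e_j(\bar a)=1$, and with some conjunct $c_k(\bar a)\neq1$ of the conclusion. I would apply full residual $A$-freeness to the finite set of non-identity elements $\{a_x,\,c_k(\bar a)\}$, obtaining a homomorphism $\varphi\colon G\to F$ into an $A$-free group $F$ with $\varphi(a_x)\neq1$ and $\varphi(c_k(\bar a))\neq1$. Homomorphisms preserve the equations $e_j$, so $P(\varphi(\bar a))$ holds in $F$, whereas $c_k(\varphi(\bar a))\neq1$ shows $C(\varphi(\bar a))$ fails. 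Thus $F$ violates the very same axiom, contradicting $F\in\CSX$ from the first step. Hence $G$ models every axiom and $G\in\CSX$.

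The main obstacle is precisely this logical bookkeeping: the argument goes through only because each conclusion $C$ is a conjunction of equations, so that its failure is witnessed by a single inequation $c_k\neq1$ which a discriminating homomorphism can be forced to preserve while equations are preserved for free. Confirming this shape, together with the infinite-factor step where one needs $A$-free groups to be genuinely $\CSX$ via inductiveness, is where the hypotheses (finite basis and finite presentability of $F_2(\X)$, absence of involutions, and $\CSX\subseteq\XT$) are consumed. I note that this route appears not to require the equational Noetherianity of $A$; equivalently, one may package the same idea by embedding $G$ into an ultraproduct of $A$-free groups (for each finite $S\subseteq G\setminus\{1\}$ pick a separating $\varphi_S$ and diagonalise over a suitable ultrafilter) and then invoke closure of the universal class $\CSX$ under ultraproducts and under subgroups (Proposition~\ref{Subgroup}).
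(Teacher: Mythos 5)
Your proof is correct, but it takes a genuinely different route from the paper's. The paper first reduces to finitely generated $G$ (via universality of $\CSX$), then invokes the unification theorem of Daniyarova--Myasnikov--Remeslennikov: since $H=A\ast A$ is finitely generated and equationally Noetherian (citing Sela), a finitely generated fully residually $A$-free group embeds in an ultrapower $H^I/\mathcal{U}$; as $H\in\CSX$ by Theorem~\ref{Free} and $\CSX$ is universal by Theorem~\ref{Universal}, the ultrapower and then $G$ lie in $\CSX$. You instead bypass the unification theorem entirely and argue syntactically: you verify that each axiom $\mathrm{Sub}_{\X}$, $\mathrm{Mal}_{\X}$, $\X^n$ has a premise consisting of one inequation plus equations and a conclusion that is a finite conjunction of equations, so a failure in $G$ is witnessed by a tuple together with a single inequation $c_k(\bar a)\neq 1$; discriminating the two non-trivial elements $a_x$ and $c_k(\bar a)$ into an $A$-free group $F$ transports the failure to $F$, contradicting $F\in\CSX$ (which you establish from Theorem~\ref{Free} by induction on the number of factors, plus inductiveness of $\CSX$ for infinitely many factors --- one could equally use that universal classes are local and that finitely generated subgroups of $A$-free groups lie in finite sub-free-products). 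Your observation that this route does not consume the equational Noetherianity of $A$ is a genuine simplification of the hypotheses: that assumption enters the paper's proof only through the unification theorem, and your direct discrimination argument (or, as you note, the elementary diagonal embedding of $G$ into an ultraproduct of the discriminating images) avoids it. What the paper's approach buys in exchange is brevity and the avoidance of any inspection of the concrete shape of the axioms; what yours buys is a self-contained argument with strictly weaker hypotheses and no reliance on Sela's theorem that $A\ast A$ is equationally Noetherian.
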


\begin{proof}
Suppose that a finitely generated group $G$ is a fully residually $A$-free. We apply  the unification theorem of \cite{DMR} (Theorem A); since the group $H=A\ast A$ is both finitely generated and equationally Noetherian (see Theorem 9.1 of \cite{Sela}), $G$ embeds in an ultra-product $H^I/\mathcal{U}$ of $H$. Now, $A\in \X\subseteq \CSX$ and $A$ does not contain any involution so $H\in \CSX$. But the class $\CSX$ is universal by \ref{Universal} so, $H^I/\mathcal{U}$ is also a $\CSX$-group, and hence, $G\in \CSX$. If $G$ is not finitely generated, then we may use the fact that the class $\CSX$ is universal, and as such, $G$ is a $\CSX$-group if and only if every finitely generated subgroup of $G$ is so.
\end{proof}

In the remaining part of this section, we will need to use {\em marginal subgroups}; we briefly review this concept. Consider a word $w=w(x_1,\ldots,x_n)$ and a group $G$. The corresponding marginal subgroup $w^{\ast}(G)$ consists of all elements $g\in G$ such that, for all $g_1,\ldots,g_n\in G$ and all $1\leq i\leq n$, we have
$$
w(g_1,\ldots,g_{i-1},gg_i,g_{i+1},\ldots,g_n)=w(g_1,\ldots,g_i,\ldots,g_n).
$$
Two basic but important observations regarding the subgroup $w^{\ast}(G)$ are in order. First, it is easy to see that $w^{\ast}(G)$ is a characteristic subgroup of $G$. Secondly, if $\X$ is a variety of groups, then $G\in \X$ if and only if $w^{\ast}(G)=G$ for every $w\in \idX$.
We define
$$
\X^{\ast}(G)=\bigcap_{w\in\idX}w^{\ast}(G).
$$
This is called the marginal subgroup of $G$ corresponding to the variety $\X$.

\begin{lemma}\label{Marginal-1}
Let $\X$ be a variety and $A$ be a non-trivial element in $\X$ with no involutions. Let $G$ be residually $A$-free, and $N$ be a normal subgroup of $G$ that belongs to the variety $\X$. Then, $N\subseteq \X^{\ast}(G)$.
\end{lemma}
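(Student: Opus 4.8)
The plan is to verify the defining inclusion elementwise. Fix $n\in N$, an identity $w\in\idX$, elements $g_1,\dots,g_k\in G$ and an index $i$, and consider
$$
u=w(g_1,\dots,ng_i,\dots,g_k)\,w(g_1,\dots,g_i,\dots,g_k)^{-1}.
$$
Showing $u=1$ for every such choice is precisely the statement $n\in w^{\ast}(G)$, and intersecting over all $w\in\idX$ then yields $n\in\X^{\ast}(G)$, as required. Since $G$ is residually $A$-free, it suffices to prove $\phi(u)=1$ for every homomorphism $\phi\colon G\to F$ onto (into) an $A$-free group $F$: if $u\neq 1$ some such $\phi$ would separate it. Writing $\bar N=\phi(N)$ and $\bar G=\phi(G)$, I record two free observations: $\bar N$ is normal in $\bar G$, and $\bar N\in\X$ because $\X$ is a variety and hence closed under homomorphic images.

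The heart of the argument is the claim that \emph{if $\bar N\neq\{1\}$ then $\bar G\in\X$}. Granting it, the proof closes immediately: if $\phi(n)=1$ then $\phi(u)=1$ trivially; and if $\phi(n)\neq 1$, so $\bar N\neq\{1\}$, then every argument occurring in $\phi(u)$ — the elements $\phi(g_l)$ together with $\phi(n)\phi(g_i)$ — lies in $\bar G\in\X$, so that both verbal values $w(\dots)$ equal $1$ because $w$ is a law of $\X$, giving $\phi(u)=1$. (If $\X$ is the variety of all groups the lemma is vacuous, since then $\X^{\ast}(G)=G$; so we may assume $\X$ is proper, which forces $\mathbb{F}_2\notin\X$.)

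To establish the claim I would invoke the structure theory of free products. As $\bar N\in\X$ and $\X$ is proper, $\bar N$ contains no copy of $\mathbb{F}_2$; applying the Kurosh subgroup theorem to $\bar N\leq F$, and using standard facts about free products (a free factor together with any nontrivial group, or two nontrivial groups not both of order two, generates a copy of $\mathbb{F}_2$), together with the hypothesis that $A$ — and hence $F$ — has no involutions, which eliminates the exceptional infinite dihedral factor $\mathbb{Z}/2\ast\mathbb{Z}/2$, forces a clean dichotomy: either $\bar N$ is conjugate into a single free factor, $\bar N\subseteq A_j^{x}$, or $\bar N$ is infinite cyclic, $\bar N=\langle z\rangle$, with $z$ not conjugate into any factor. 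In the first case the malnormality and self-normality of the factors of a free product give $N_F(\bar N)\subseteq A_j^{x}\cong A\in\X$. In the second case the centralizer of a hyperbolic element is infinite cyclic, and since $F$ has no involutions the normalizer cannot extend it by an inversion, so $N_F(\langle z\rangle)=C_F(z)$ is infinite cyclic $\cong\bar N\in\X$. As $\bar N$ is normal in $\bar G$ we have $\bar G\subseteq N_F(\bar N)$, and in both cases $N_F(\bar N)$ is an $\X$-group; hence $\bar G\in\X$, which proves the claim.

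The delicate part is exactly this structural dichotomy and the two normalizer computations inside $F$: recognizing $\mathbb{F}_2$-free subgroups via Kurosh, and evaluating $N_F(\langle z\rangle)$ for a hyperbolic $z$. Both are cleanest through the Bass--Serre action of $F$ on its tree, where the no-involution hypothesis is precisely what rules out the infinite dihedral extensions; this is where the assumption that $A$ has no involutions is genuinely used. By contrast, the reduction through residual $A$-freeness and the collapse of the marginal equation once $\bar G\in\X$ are purely formal.
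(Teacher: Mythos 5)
Your argument is correct, and it reaches the conclusion by a genuinely different (though closely related) route. The paper argues by contradiction: it picks $a\in N\setminus\X^{\ast}(G)$, takes a single separating homomorphism $\alpha\colon G\to A\ast A$, observes that the failure of the marginal equation forces $K=\mathrm{Im}(\alpha)\notin\X$, and then applies the Kurosh theorem to $K$ itself, checking in each case ($K$ free, $K$ a nontrivial free product) that the nontrivial normal $\X$-subgroup $\alpha(N)\unlhd K$ is incompatible with $K\notin\X$ unless $\X$ is the variety of all groups. You instead verify the marginal identities directly, apply Kurosh to the image $\phi(N)$ rather than to $\phi(G)$, and bound $\phi(G)$ by the normalizer $N_F(\phi(N))$ computed in the ambient $A$-free group, showing that normalizer lies in $\X$ in both branches of your dichotomy (conjugate into a factor, or hyperbolic infinite cyclic). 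Both proofs rest on the same three ingredients --- Kurosh, malnormality of free factors, and the no-involution hypothesis ruling out $\mathbb{Z}/2\ast\mathbb{Z}/2$ --- but your version isolates a cleaner structural claim (a nontrivial normal $\X$-subgroup forces the whole image into $\X$) at the price of the normalizer/centralizer computations for a hyperbolic element, which you only sketch; these are standard (the ``no inversion'' step follows since $f^{-1}zf=z^{-1}$ gives $f^2\in C_F(z)$ and then $f^4=1$, impossible for $f\neq 1$ in a free product of involution-free groups), but you should spell them out if you write this up. The paper's casing on $K$ avoids that computation, replacing it with the observation that a nontrivial cyclic normal subgroup of a free group forces the free group to be cyclic. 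One small omission on both sides worth noting in yours: you correctly dispose of the degenerate situations ($\X$ the variety of all groups, $\phi(n)=1$), which the paper handles only implicitly.
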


\begin{proof}
Suppose on the contrary that $N\not\subseteq \X^{\ast}(G)$ and let $a\in N\setminus \X^{\ast}(G)$. Then, there is an identity $w\in \idnX$ such that $a\not\in w^{\ast}(G)$. Hence, there is an index $1\leq i\leq n$ and elements $g_1, \ldots, g_n\in G$ such that
$$
w(g_1,\ldots,g_{i-1},ag_i,g_{i+1},\ldots,g_n)\neq w(g_1,\ldots,g_n).
$$
But, $G$ is residually $A$-free so, there is a homomorphism $\alpha:G\to A\ast A$ such that
$$
\alpha(w(g_1,\ldots,g_{i-1},ag_i,g_{i+1},\ldots,g_n))\neq \alpha(w(g_1,\ldots ,g_n)),
$$
which means that
$$
w(\alpha(g_1),\ldots,\alpha(a)\alpha(g_i),\ldots,\alpha(g_n))\neq w(\alpha(g_1),\ldots,\alpha(g_n)).
$$
Note that this implies that $\alpha(a)\neq 1$. Now, suppose $K=\mathrm{Im}(\alpha)\leq A\ast A$ (the image of $\alpha$). We have $K\not\in \X$. By the Kurosh subgroup theorem, we have
$$
K=\mathbb{F}[X]\ast \prod_{i\in I}^{\ast}K_i^{x_i},
$$
where $X\subseteq A\ast A$, $\mathbb{F}[X]$ is the free group generated by $X$, each $K_i$ is a subgroup of $A$, and each $x_i$ belongs to $A\ast A$. We have the following cases:
\begin{enumerate}
\item $K=\mathbb{F}[X]$. As $N\unlhd G$, we have $\alpha(N)\unlhd K$. Also, $\alpha(a)\neq 1$, and since $a\in N$, we have $\alpha(N)\neq 1$. Furthermore, $N\in \X$ implies that $\alpha(N)\in \X$. As a result, $\alpha(N)$ is a free group and there are two possibilities:\\
    i- If $\alpha(N)$ is not abelian, then $\mathbb{F}_2$ embeds in $\alpha(N)$ and hence $\mathbb{F}_2\in \X$. This implies that $\X$ is the variety of all groups and therefore, $\X^{\ast}(G)=G$.\\
    ii- If $\alpha(N)$ is cyclic, then as it is also a normal subgroup of $\mathbb{F}[X]$, we must have $|X|=1$. Hence, $K$ is an infinite cyclic group and $\alpha(N)$ is a non-trivial subgroup of $K$. Thus $K\cong \alpha(N)\in \X$, which is a contradiction.
\item Let $K=B\ast C$ for some non-trivial factors $B$ and $C$. As $A$ does not contain any involution, neither do $B$ and $C$. We have
$$
\{1\}\neq \alpha(N)\unlhd K, \quad \alpha(N)\in \X.
$$
Hence, there are two cases \\
i- $\alpha(N)$ is a subgroup of a conjugate of $B$ or $C$. But if this happens, then $\alpha(N)$ will not be a normal subgroup of $K$.\\
ii- $\alpha(N)$ is a free product of two non-trivial groups. Again, in this case we conclude that $\mathbb{F}_2$ iembeds in $\alpha(N)$, and consequently, $\X$ is the variety of all groups.
\end{enumerate}
This completes the proof.
\end{proof}

\begin{lemma}\label{Marginal-2}
If $\X$ is a variety of groups containing all abelian groups and $G$ is an $\XT$-group that does not belong to $\X$, then $\X^*(G)=\{1\}$.
\end{lemma}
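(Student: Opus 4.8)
The plan is to argue by contradiction, producing from a nontrivial marginal element an $\X$-centralizer equal to all of $G$, and then invoking the $\XT$-characterization of Proposition \ref{Cent-1} to force $G\in\X$. So I would suppose $\X^{\ast}(G)\neq\{1\}$, fix a nonidentity element $a\in\X^{\ast}(G)$, and aim to prove that $\langle a,x\rangle\in\X$ for every $x\in G$. This is exactly the assertion $C_{\X}(a)=G$; since $G$ is an $\XT$-group, Proposition \ref{Cent-1} then says $C_{\X}(a)$ is empty or an $\X$-subgroup, and as $1\in C_{\X}(a)=G$ we would conclude $G=C_{\X}(a)\in\X$, contradicting the hypothesis $G\notin\X$. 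Thus $\X^{\ast}(G)=\{1\}$.

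The heart of the matter is the claim that $a\in\X^{\ast}(G)$ forces $\langle a,x\rangle$ to be an $\X$-group. To establish $H:=\langle a,x\rangle\in\X$ I would verify that every law $w=w(x_1,\ldots,x_n)\in\idX$ holds identically on $H$. Fix such a $w$ and arbitrary $h_1,\ldots,h_n\in H$; each $h_j$ is a word in $a$ and $x$, i.e.\ an alternating product of powers of $a$ and powers of $x$. The key device is that $\X^{\ast}(G)$ is characteristic, hence normal, so every power of every conjugate of $a$ again lies in $\X^{\ast}(G)\subseteq w^{\ast}(G)$. Given an interior block $v\,a^{k}\,v'$ inside $h_j$, I would rewrite it (in the paper's convention $a^{g}=g^{-1}ag$) as $v\,a^{k}\,v'=(v\,a^{k}\,v^{-1})(vv')=(a^{k})^{v^{-1}}(vv')$, which moves the $a$-block to the front as an element of $w^{\ast}(G)$; the left-marginality defining $w^{\ast}(G)$ then deletes this front factor without altering the value of $w$. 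Iterating over all $a$-blocks reduces each argument $h_j$ to the pure power $x^{L_j}$, where $L_j$ is the total $x$-exponent of $h_j$.

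After this reduction I would have $w(h_1,\ldots,h_n)=w(x^{L_1},\ldots,x^{L_n})$, that is, $w$ evaluated on elements of the cyclic subgroup $\langle x\rangle$. Since $\X$ contains all abelian groups, $\langle x\rangle\in\X$, so $w$ vanishes there and $w(x^{L_1},\ldots,x^{L_n})=1$. As $w\in\idX$ and the $h_j$ were arbitrary, $H=\langle a,x\rangle$ satisfies every law of $\X$, whence $H\in\X$. This proves the claim and, by the first paragraph, the proposition. (It is worth noting this strengthens item~(4) of the earlier proposition, since the standing inclusion of all abelian groups gives $Z(G)\subseteq\X^{\ast}(G)$, so $\X^{\ast}(G)=\{1\}$ recovers centerlessness.)

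I expect the main obstacle to be the bookkeeping in the deletion step: one must be careful that it is left-marginality together with the normality of $\X^{\ast}(G)$ (rather than any two-sided marginality assumption) that legitimizes pulling each conjugated block $(a^{k})^{v^{-1}}$ to the front and discarding it, and that the residue of $h_j$ is exactly $x^{L_j}$ with $L_j$ the $x$-exponent sum. Everything else — the reduction to $C_{\X}(a)=G$ and the final appeal to $\langle x\rangle\in\X$ — is routine given Proposition \ref{Cent-1} and the hypothesis that $\X$ contains all abelian groups.
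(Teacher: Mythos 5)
Your argument is correct and takes essentially the same route as the paper: both proofs write each element of $\langle a,x\rangle$ as an $\X^{\ast}(G)$-factor times a power of $x$ using normality, delete that factor by marginality, and use that $\X$ contains all (cyclic) abelian groups, before invoking the $\XT$ property (you via Proposition \ref{Cent-1}, the paper directly together with locality of the variety) to force $G\in\X$ and reach the contradiction. No gaps.
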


\begin{proof}
Suppose on the contrary that $a\neq 1$ is an element of $\X^{\ast}(G)$. Let $g\in G$ be an arbitrary element. Then, every element of the subgroup $\langle a, g\rangle$ has the form
$$
u(a, g)=a^{\lambda_1}g^{\eta_1}a^{\lambda_2}g^{\eta_2}\cdots a^{\lambda_r}g^{\eta_r}
$$
for some integers $\lambda_1, \eta_1, \ldots, \lambda_r, \eta_r$. As $\X^{\ast}(G)\unlhd G$, we can rewrite $u(a, g)$ as
$$
u(a, g)=g^{\alpha}b, \quad \alpha\in \mathbb{Z},\ b\in \X^{\ast}(G).
$$
Suppose $w\in \idnX$ and $u_1, \ldots, u_n\in \langle a, g\rangle$. For every $i$, we have
$$
u_i=g^{\alpha_i}b_i,\quad \alpha_i\in \mathbb{Z},\ b_i\in \X^{\ast}(G).
$$
Therefore,
$$
w(u_1, \ldots, u_n)=w(g^{\alpha_1}b_1, \ldots, g^{\alpha_n}b_n)=w(g^{\alpha_1}, \ldots, g^{\alpha_n})=1.
$$
Note that the last equality holds because $\X$ contains all abelian groups. Consequently, $\langle a, g\rangle\in \X$ for all $g\in G$. Now, it is enough to use the assumption $G\in \XT$ to conclude that
$$
\langle a, g_1, \ldots, g_m\rangle \in \X,
$$
for every finite set $g_1, \ldots, g_m\in G$. This shows that $G\in \X$, which is a contradiction.
\end{proof}

Now, we are ready to prove that in the presence of the residual $A$-free assumption, every $\XT$-group belongs to $\CSX$.

\begin{theorem}
Let $\X$ be a variety which contains all abelian groups and suppose that $\CSX\subseteq \XT$. Let $A\in \X$ be a group without any involution. Then, every residually $A$-free $\XT$-group belongs to $\CSX$.
\end{theorem}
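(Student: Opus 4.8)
The plan is to invoke the converse half of Proposition \ref{CSX}. Since $G$ is assumed to be an $\XT$-group, it suffices to verify the implication appearing there: for every $a \in G \setminus \{1\}$ and every $z \in G$, the condition $\langle a, a^z\rangle \in \X$ forces $\langle a, z\rangle \in \X$. Once this is established, Proposition \ref{CSX} (whose standing hypothesis that $\X$ be inductive is satisfied, as $\X$ is a variety) immediately yields $G \in \CSX$.

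So I would fix $a \neq 1$ and $z$ with $\langle a, a^z\rangle \in \X$, set $H = \langle a, z\rangle$, and reduce the entire problem to showing $H \in \X$. The point of passing to $H$ is that it inherits both relevant hypotheses: it is an $\XT$-group because the class $\XT$ is subgroup-closed, and it is residually $A$-free because the defining homomorphisms of $G$ restrict to $H$. If $H \in \X$ there is nothing to prove, so I would argue by contradiction and assume $H \notin \X$.

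The key step, and the part I expect to demand the most care, is to produce a non-trivial normal $\X$-subgroup of $H$. The natural candidate is the normal closure $N = \langle a^H\rangle$, which one checks equals $\langle a^{z^k} : k \in \mathbb{Z}\rangle$: this subgroup is visibly stable under conjugation by $z$, and since it contains $a$ it is stable under conjugation by $a$ as well, hence normal in $H = \langle a, z\rangle$. To see that $N \in \X$, I would build it up from the hypothesis using the $\XT$-property of $H$. Starting from $\langle a, a^z\rangle \in \X$, conjugation by $z^{\pm 1}$ keeps us inside $\X$ (a conjugate of an $\X$-subgroup is an $\X$-subgroup), and two consecutive blocks $\langle a^{z^{-n}}, \ldots, a^{z^n}\rangle$ and its $z$-shift overlap in the non-identity conjugate $a$ of $a$, so by $\XT$ their join is again in $\X$. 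Iterating gives $\langle a^{z^{-n}}, \ldots, a^{z^n}\rangle \in \X$ for every $n$, and since a variety is inductive, the ascending union $N$ belongs to $\X$.

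With $N$ in hand the contradiction is immediate. Lemma \ref{Marginal-1} applies to the residually $A$-free group $H$ and its normal $\X$-subgroup $N$, giving $N \subseteq \X^{\ast}(H)$. On the other hand, $H$ is an $\XT$-group not belonging to $\X$, and $\X$ contains all abelian groups, so Lemma \ref{Marginal-2} gives $\X^{\ast}(H) = \{1\}$. Hence $N = \{1\}$, which is absurd since $a \in N$ and $a \neq 1$. Therefore $H \in \X$, that is $\langle a, z\rangle \in \X$, which completes the verification of the criterion in Proposition \ref{CSX} and proves $G \in \CSX$.
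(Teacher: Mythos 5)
Your proposal is correct and follows essentially the same route as the paper: reduce via Proposition \ref{CSX} to showing $\langle a,z\rangle\in\X$, form the normal closure $N$ of $a$ in $\langle a,z\rangle$, build $N$ up inside $\X$ from $z$-translates of $\langle a,a^z\rangle$ using the $\XT$-property and inductivity of the variety, and then derive the contradiction $N=\{1\}$ from Lemmas \ref{Marginal-1} and \ref{Marginal-2}. Your observation that $N$ is already generated by the conjugates $a^{z^k}$ alone (since a subgroup containing $a$ is automatically stable under conjugation by $a$) is a mild and valid streamlining of the paper's bookkeeping over all words of bounded length in $\{x,z\}$.
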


\begin{proof}
Suppose $G\not\in \CSX$. Then, by \ref{CSX}, there are elements $x, z$ such that $x\neq 1$, $\langle x, x^z\rangle \in \X$, and $\langle x, z\rangle \not\in \X$. Let $G_0=\langle x, z\rangle$. Note that $G_0$ is residually $A$-free and $\XT$. Let $N=\langle x^{G_0}\rangle$. We show that $N\in \X$. Note that
$$
N=\langle u^{-1}xu:\ u\in G_0\rangle.
$$
As $\langle x, x^z\rangle \in \X$, by conjugating with $x$ and $z$, all the subgroups
$$
\langle x, z^{-1}xz\rangle,\ \langle x, x^{-1}z^{-1}xzx\rangle,\ \langle z^{-1}xz, z^{-2}xz^2\rangle
$$
belong to $\X$. Using the assumption of $\XT$, we obtain
$$
\langle x, z^{-1}xz, x^{-1}z^{-1}xzx, z^{-1}xz, z^{-2}xz^2\rangle\in \X.
$$
This means that
$$
\langle u^{-1}xu:\ u\in G_0, \ |u|\leq 2\rangle\in \X,
$$
where $|u|$ denotes the word length of the group $u$ in $G_0$ with respect to the generating set $\{ x, z\}$. Using a similar argument, we see that
$$
\langle u^{-1}xu:\ u\in G_0, \ |u|\leq m\rangle\in \X
$$
for every $m\geq 1$. This implies that $N\in \X$ and hence, by \ref{Marginal-1}, we have $N\subseteq \X^{\ast}(G_0)$. But then, \ref{Marginal-2} implies that $N=\{1\}$ which is a contradiction. Therefore, $G$ belongs to $\CSX$.
\end{proof}

As a special case, we have the following corollary.

\begin{corollary}
Let $\X$ be a variety which contains all abelian groups and suppose that $\CSX\subseteq \XT$.  Then, every residually free $\XT$-group belongs to $\CSX$.
\end{corollary}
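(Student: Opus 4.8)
The plan is to deduce this corollary directly from the preceding theorem by taking the parameter group $A$ to be the infinite cyclic group $\mathbb{Z}$. The first step is to verify that $\mathbb{Z}$ satisfies the two standing hypotheses imposed on $A$ in the theorem: it lies in $\X$ because $\X$ is assumed to contain all abelian groups, and it has no involution because it is torsion-free. The second step is to recall the observation recorded at the start of this section, namely that when $A$ is infinite cyclic the $A$-free groups are precisely the ordinary free groups. Consequently the notion of being residually $\mathbb{Z}$-free coincides verbatim with the notion of being residually free, so that a residually free group is exactly a residually $\mathbb{Z}$-free group.

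With these two identifications in hand, the argument is immediate: any residually free $\XT$-group is a residually $\mathbb{Z}$-free $\XT$-group, and since the parameter $A=\mathbb{Z}$ belongs to $\X$ and is involution-free, while $\X$ contains all abelian groups and satisfies $\CSX\subseteq\XT$, every hypothesis of the theorem is met. Applying the theorem with $A=\mathbb{Z}$ therefore places the group in $\CSX$, which is exactly the assertion of the corollary.

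I do not anticipate any genuine obstacle here, since the entire substance of the result already resides in the theorem and the corollary merely instantiates the free parameter $A$. The only matters requiring verification are the membership $\mathbb{Z}\in\X$ and the absence of involutions in $\mathbb{Z}$, together with the identification of residual $\mathbb{Z}$-freeness with residual freeness; all three are routine and have in fact already been noted in the preparatory discussion of this section.
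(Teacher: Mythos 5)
Your proposal is correct and matches the paper's intended argument: the corollary is stated there as an immediate special case of the preceding theorem, obtained exactly by taking $A=\mathbb{Z}$, noting $\mathbb{Z}\in\X$ (abelian), $\mathbb{Z}$ has no involutions, and that residually $\mathbb{Z}$-free means residually free. Nothing further is needed.
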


We have also the the following result.

\begin{theorem}
Let $\X$ be a variety containing all abelian groups and assume that $\CSX\subseteq\XT$. Let $A\in\X$ be a group with no involutions, and let $G\in \XT\setminus \X$ be residually $A$-free. Then, $G$ is fully residually $A$-free.
\end{theorem}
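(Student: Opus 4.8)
The plan is to first feed the hypotheses into the structural results already established and then to reduce full residual $A$-freeness to a \emph{single} application of ordinary residual $A$-freeness by manufacturing one well-chosen witness element. First I would observe that $G$ lands in $\CSX$: since $G$ is a residually $A$-free $\XT$-group, the preceding theorem (every residually $A$-free $\XT$-group belongs to $\CSX$) gives $G\in\CSX$, using that $\X$ is a variety containing all abelian groups, that $\CSX\subseteq\XT$, and that $A\in\X$ has no involutions. As moreover $G\notin\X$, and $\X$ — being a variety containing all abelian groups — is inductive and contains all cyclic groups, the theorem on equational domains applies and shows that $G$ is an equational domain. In particular $G$ has no non-trivial zero divisor, which, spelled out via the zero-divisor characterization recalled in Section 4, reads: for every pair of non-identity elements $x,y\in G$ there exists $c\in G$ with $[x^{c},y]\neq 1$.

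The crux is a simple observation that converts this domain property into a discrimination device. Suppose $w=[a^{c},b]$ and $\chi\colon G\to F$ is any homomorphism into an $A$-free group $F$ with $\chi(w)\neq 1$. Then necessarily $\chi(a)\neq 1$ and $\chi(b)\neq 1$, because if either $\chi(a)=1$ or $\chi(b)=1$ the image $\chi([a^{c},b])$ collapses to the identity. Thus a single surviving commutator forces both of its entries to survive. I would iterate this to encode the survival of an entire finite set in one element.

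Given distinct non-identity elements $g_{1},\dots,g_{n}$, set $t_{1}=g_{1}$ and, for $2\le k\le n$, put $t_{k}=[\,t_{k-1}^{\,c_{k-1}},\,g_{k}\,]$, where $c_{k-1}\in G$ is chosen by the domain property (applicable since $t_{k-1}\neq 1$ and $g_{k}\neq 1$) so that $t_{k}\neq 1$. By the observation above, any homomorphism that does not kill $t_{n}$ cannot kill $t_{n-1}$ nor $g_{n}$, and descending along this chain it cannot kill any of $g_{1},\dots,g_{n}$. Finally, since $t_{n}\neq 1$ and $G$ is residually $A$-free, there is a homomorphism $\chi\colon G\to F$ into an $A$-free group $F$ with $\chi(t_{n})\neq 1$; this single $\chi$ then sends every $g_{i}$ to a non-identity element, which is precisely full residual $A$-freeness.

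I expect the routine part to be the bookkeeping that each commutator collapses when one entry dies and that the domain property really is invocable at every stage. The conceptual heart of the argument — and the step I would flag as doing all the work — is the passage from ``$G$ is a domain'' to the construction of one witness $t_{n}$ whose survival discriminates the whole finite family; this neatly sidesteps the usual and far more delicate task of amalgamating several homomorphisms into distinct $A$-free targets into a single one. It is worth noting that, consistent with the weaker hypotheses on $A$ in the statement, this approach requires neither finite generation of $G$ nor equational Noetherianity of $A$, since at every stage it manipulates only finitely many elements.
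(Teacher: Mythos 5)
Your proof is correct, and it takes a genuinely different route from the paper's. The paper works directly: it invokes Lemma 7.2 to get $\X^{\ast}(G)=\{1\}$, hence no non-trivial normal $\X$-subgroup, and then runs an induction on the number $m$ of elements to be discriminated — at each step it finds $x$ with $g_m^x\notin C_{\X}(g_1)$ (else $\langle g_m^G\rangle$ would be a non-trivial normal $\X$-subgroup), extracts a non-trivial value $w(u_1,\ldots,u_n)$ of an identity $w\in\idnX$ on words $u_i$ in $g_1$ and $g_m^x$, and feeds $\{g_2,\ldots,g_{m-1},w(u_1,\ldots,u_n)\}$ back into the induction hypothesis; the hypothesis that $\X$ contains all abelian groups is what forces $w(u_1,\ldots,u_n)$ to die whenever $g_1$ or $g_m$ does. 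You instead route through the already-proved facts that $G\in\CSX$ (the preceding theorem) and that a $\CSX$-group outside $\X$ is a domain (Section 4), and then use the classical commutator-chain trick $t_k=[t_{k-1}^{c_{k-1}},g_k]$ to compress the whole finite family into one witness $t_n$, so that a single application of ordinary residual $A$-freeness finishes the job. Your version is shorter and replaces the induction over homomorphisms by one element; its only cost is that it leans on two earlier theorems (and, through them, on the same marginal-subgroup lemmas the paper uses here directly), whereas the paper's argument is self-contained modulo Lemmas 7.1 and 7.2. Both proofs establish exactly the notion of full residual $A$-freeness as defined in the paper (non-triviality, not distinctness, of the images), and both correctly avoid any finite generation or equational Noetherianity assumptions.
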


\begin{proof}
We have $\X^{\ast}(G)=\{1\}$ by Lemma \ref{Marginal-2} so there is no normal $\X$-subgroup of $G$ except the trivial one. We proceed by induction. Suppose for all non-identity $x_1, \ldots, x_{m-1}\in G$ there is a homomorphism $\alpha:G\to A\ast A$ such that
$$
\alpha(x_1)\neq 1, \ldots, \alpha(x_{m-1})\neq 1,
$$
and let $g_1, \ldots, g_m\in G$ be non-identity elements. Suppose for every $x\in G$ we have $g_m^x\in C_{\X}(g_1)$. Note that, according to Proposition \ref{Cent-1}, the generalized centralizer $C_{\X}(g_1)$ is a subgroup, and hence,  $H=\langle g_m^G\rangle\subseteq C_{\X}(g_1)$ is a non-trivial normal $\X$-subgroup of $G$. This contradiction shows that there exists $x\in G$ such that $g_m^x\not\in C_{\X}(g_1)$. This means that
$\langle g_1, g_m^x\rangle\not\in \X$, and as a result, there is an identity $w\in \idnX$ and elements
$$
u_1, \ldots, u_n\in \langle g_1, g_m^x\rangle
$$
such that $w(u_1, \ldots, u_n)\neq 1$. Note that each $u_i$ is a word $u_i=u_i(g_1, g_m^x)$. Now, consider the set
$$
\{ g_2, \ldots, g_{m-1}, w(u_1, \ldots, u_n)\}
$$
which has at most $m-1$ elements. By the induction hypothesis, there is a homomorphism $\alpha: G\to A\ast A$ such that
$$
\alpha(g_2)\neq 1, \ldots, \alpha(g_{m-1})\neq 1, \alpha(w(u_1, \ldots, u_n))\neq 1.
$$
Suppose $\alpha(g_1)=1$ or $\alpha(g_m)=1$. In the first case $\alpha(u_i)$ is a power $\alpha(g_m)$ for all $1\leq i\leq n$ and in the second case $\alpha(u_i)$ is a power $\alpha(g_1)$ for all $1\leq i\leq n$. As a result, in either of these two cases we have
$$
\alpha(w(u_1, \ldots, u_n))=w(\alpha(u_1), \ldots, \alpha(u_n))=1.
$$
This contradiction shows that $\alpha(g_1)\neq 1, \ldots, \alpha(g_m)\neq 1$.
\end{proof}

If we consider the special case where $A$ is an infinite cyclic group, then we obtain the following result.

\begin{corollary}
Let $\X$ be a variety containing all abelian groups and assume that $\CSX\subseteq\XT$. Let  $G\in \XT\setminus \X$ be residually free. Then, $G$ is fully residually free.
\end{corollary}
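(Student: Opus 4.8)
The plan is to obtain this corollary as an immediate specialization of the preceding theorem, taking $A$ to be the infinite cyclic group $\mathbb{Z}$. Recall from the discussion at the beginning of this section that when $A$ is infinite cyclic the $A$-free groups are precisely the ordinary free groups; consequently the notions of residually $A$-free and fully residually $A$-free collapse to the usual notions of residually free and fully residually free. Thus the conclusion of the theorem, instantiated at $A=\mathbb{Z}$, reads exactly as the assertion of this corollary.

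First I would verify that $A=\mathbb{Z}$ satisfies every hypothesis imposed on $A$ by the theorem. Since $\X$ is assumed to contain all abelian groups, we have $\mathbb{Z}\in\X$, so the requirement $A\in\X$ is met. Moreover $\mathbb{Z}$ is torsion-free and therefore contains no involution, which is the other standing assumption on $A$. The remaining hypotheses---that $\X$ is a variety containing all abelian groups, that $\CSX\subseteq\XT$, and that $G\in\XT\setminus\X$---carry over verbatim from the statement of the corollary, so no additional work is needed to invoke the theorem.

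With these checks in place, the theorem applies directly and tells us that the residually $\mathbb{Z}$-free $\XT$-group $G$, lying outside $\X$, is fully residually $\mathbb{Z}$-free. Translating back through the identifications ``residually $\mathbb{Z}$-free $=$ residually free'' and ``fully residually $\mathbb{Z}$-free $=$ fully residually free'' yields precisely that $G$ is fully residually free, as desired. I do not anticipate any genuine obstacle here; the single point requiring care is the correct identification of $\mathbb{Z}$-free groups with free groups, which is already recorded in the preliminary remarks of this section, together with the trivial observation that $\mathbb{Z}$ has no involutions.
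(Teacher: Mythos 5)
Your proposal is correct and matches the paper exactly: the paper derives this corollary as the special case $A=\mathbb{Z}$ of the preceding theorem, noting (as you do) that $\mathbb{Z}$-free groups are the ordinary free groups. Your additional checks that $\mathbb{Z}\in\X$ (since $\X$ contains all abelian groups) and that $\mathbb{Z}$ has no involutions are precisely the observations needed to legitimize the specialization.
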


\end{document}